\newtheorem{theorem}{Theorem}
\newtheorem{lemma}[theorem]{Lemma}
\newtheorem{proposition}[theorem]{Proposition}
\newtheorem{remark}[theorem]{Remark}
\newtheorem{definition}[theorem]{Definition}
\title{}
\author{}
\date{}
\begin{document}
\maketitle

\begin{Huge}
    \begin{center}
    (Transposed) Poisson algebra structures on null-filiform associative algebras
\end{center}
\end{Huge}

\begin{center}

 {\bf  Jobir Adashev\footnote{Institute of Mathematics, Academy of Sciences of Uzbekistan, Tashkent; adashevjq@mail.ru},
Xursanoy Berdalova\footnote{
Institute of Mathematics, Academy of Sciences of Uzbekistan, Tashkent; \  xursanoy.muminova55@gmail.com},
Feruza Toshtemirova\footnote{Chirchik State Pedagogical University, Chirchik, Uzbekistan; \ feruzaisrailova45@gmail.com}
}

\end{center}

\begin{abstract}
In this paper we investigate classifications of all (transposed) Poisson algebras of the associated associative null-filiform algebra.
\end{abstract}

\noindent {\bf Keywords}:
{\it   Lie algebra, Poisson algebra, transposed Poisson algebra.}

\bigskip
\noindent {\bf MSC2020}:  17A30, 17A50, 17B63.

\section{Introduction}
Poisson algebras originated in the study of Poisson geometry during the 1970s and have since appeared in a wide range of mathematical and physical disciplines, including Poisson manifolds, algebraic geometry, operads, quantization theory, quantum groups, and both classical and quantum mechanics. More recently, the notion of transposed Poisson algebras has been introduced in \cite{Bai}, providing a dual perspective on Poisson structures and leading to novel algebraic frameworks. This concept extends classical Poisson geometry and has found applications in various algebraic structures, including Novikov-Poisson algebras and $3$-Lie algebras \cite{bfk22}. In \cite{jawo, kk21, YYZ07} Poisson structures on canonical algebras and the finitary incidence algebra of an arbitrary poset over a commutative unital ring are described.  In papers \cite{conj, ch24} authors obtained a rich family of identities for transposed Poisson $n$-Lie algebras, and classified transposed Poisson $3$-Lie algebras of dimension $3$.

Research on transposed Poisson structures has gained significant traction in recent years. In \cite{bfk23}, a comprehensive algebraic and geometric classification of transposed Poisson algebras was provided, further expanding the understanding of their structural properties. Various works have explored transposed Poisson structures on different classes of Lie algebras. For instance, \cite{kk22, kk23, kkg23} examined such structures on Block Lie algebras, superalgebras, and Witt-type algebras, revealing intricate interactions between transposed Poisson and classical Poisson structures. The study of transposed Poisson structures on low-dimensional Lie algebras was advanced in \cite{ADSS}, where authors investigated the the transposed Poisson structures on low dimensional quasi-filiform lie algebras of maximum length.

Further progress has been made in the classification of transposed Poisson structures on specific algebraic frameworks. In \cite{KK7}, these structures were investigated in the context of upper triangular matrix Lie algebras. Additionally, the application of transposed Poisson structures to incidence algebras was explored in \cite{kkinc}, extending their relevance to combinatorial algebra. A different perspective was provided in \cite{FKL}, where the connection between transposed Poisson algebras and $\frac{1}{2}$-derivations of Lie algebras was analyzed, uncovering new insights into derivation-based algebraic properties.

The study of Poisson and transposed Poisson structures has also extended to infinite-dimensional and graded algebras. In \cite{KKhZ, KKhZ1}, transposed Poisson structures were examined in the context of not-finitely graded Witt-type algebras and Virasoro-type algebras, emphasizing their relevance in theoretical physics and representation theory. Meanwhile, the relationship between transposed Poisson structures and bialgebras was explored in \cite{lb23}. Also, using the connection between 12-derivations of Lie algebras and transposed Poisson algebras, descriptions of all transposed Poisson structures on some types of Lie algebras were obtained \cite{AAE, KKh, klv22, bl23, kms, yh21}.

The development of transposed Poisson structures continues to be an active area of research, with recent contributions exploring their connections to Jordan superalgebras \cite{fer23}, and Schrodinger algebras \cite{ytk}. Additionally, applications to classification problems in non-associative algebras have been addressed in \cite{k23}, further integrating transposed Poisson structures into the broader landscape of algebraic structures.

This paper provides a complete classification of all (transposed) Poisson algebra structures on null-filiform associative algebras. In Section 2, we introduce the necessary preliminary definitions and results that form the basis of our study. Using these foundations, we then describe all such structures in Section 3.

\section{PRELIMINARIES}

In this section, we introduce the relevant concepts and known results. Unless stated otherwise, all algebras considered here are over the field $\mathbb{C}$.

We first recall the definition of a Poisson algebra.

\begin{definition}
Let \(\mathfrak{L}\) be a vector space equipped with two bilinear operations
\[
\cdot, \; [-,-] : \mathfrak{L} \otimes \mathfrak{L} \to \mathfrak{L}.
\]
The triple \((\mathfrak{L}, \cdot, [-,-])\) is called a \textbf{Poisson algebra} if:
\begin{enumerate}
    \item \((\mathfrak{L}, \cdot)\) is a commutative associative algebra,
    \item \((\mathfrak{L}, [-,-])\) is a Lie algebra,
    \item The two operations satisfy the compatibility condition
    \begin{equation}\label{eq:LR}
    [x, y \cdot z] = [x, y] \cdot z + y \cdot [x, z], \quad \text{for all } x,y,z \in \mathfrak{L}.
    \end{equation}
\end{enumerate}
\end{definition}

Eq. (\ref{eq:LR}) is called the {\bf Leibniz
rule} since the adjoint operators of the Lie algebra are
derivations of the commutative associative algebra.

\begin{definition}
Let $\mathfrak{L}$ be a vector space equipped with two bilinear operations
$$
\cdot,\; [-,-] :\mathfrak{L}\otimes \mathfrak{L}\to \mathfrak{L}.$$
The triple $(\mathfrak{L},\cdot,[-,-])$ is called a \textbf{transposed Poisson algebra} if $(\mathfrak{L},\cdot)$ is a commutative associative algebra and $(\mathfrak{L},[-,-])$ is a Lie algebra which satisfies the following compatibility condition
\begin{equation}
2z\cdot [x,y]=[z\cdot x,y]+[x,z\cdot y].\label{eq:dualp}
\end{equation}
\end{definition}

Eq.~\eqref{eq:dualp} is called the {\bf transposed
Leibniz rule} because the roles played by the two binary operations in the Leibniz rule in a Poisson algebra are switched. Further, the resulting operation is rescaled by introducing a factor 2 on the left-hand side.

Transposed Poisson algebras were first introduced in the paper by Bai, Bai, Guo and Wu \cite{Bai}. A (transposed) Poisson algebra $\mathfrak{L}$ is called \textit{trivial}, if $\mathfrak{L}\cdot \mathfrak{L}=0$ or $[\mathfrak{L}, \mathfrak{L}]=0$.

The next result shows that the
compatibility relations of the Poisson algebra and those of the Transposed Poisson algebra are independent in the following sense.

\begin{proposition} [\cite{Bai}] Let $(\mathfrak{L},\cdot)$ be a commutative associative algebra and $(\mathfrak{L},[-,-])$ be a Lie algebra. Then $(\mathfrak{L},\cdot,[-,-])$ is both a
Poisson algebra and a Transposed Poisson algebra if and only
if

$$x\cdot [y,z]=[x\cdot y,z]=0.$$

\end{proposition}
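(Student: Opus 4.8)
The plan is to handle the two implications separately; the ``if'' part is a one-line check and the ``only if'' part carries the actual content.

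For the ``if'' direction, suppose $x\cdot[y,z]=[x\cdot y,z]=0$ for all $x,y,z\in\mathfrak{L}$. By commutativity of $\cdot$ this says precisely that any $\cdot$-product having a bracket as one factor vanishes, and that any bracket having a $\cdot$-product as one entry vanishes. I would then substitute these two facts into the Leibniz rule \eqref{eq:LR} and into the transposed Leibniz rule \eqref{eq:dualp}: in each identity every one of the (at most three) terms has one of those two forbidden shapes, so both identities collapse to $0=0$. Hence $(\mathfrak{L},\cdot,[-,-])$ is simultaneously a Poisson and a transposed Poisson algebra.

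For the ``only if'' direction, assume both \eqref{eq:LR} and \eqref{eq:dualp} hold. The first step is to expand the right-hand side of the transposed Leibniz rule using the Leibniz rule: write $[x,z\cdot y]=[x,z]\cdot y+z\cdot[x,y]$ and $[z\cdot x,y]=-[y,z\cdot x]=-[y,z]\cdot x+z\cdot[x,y]$. The two copies of $z\cdot[x,y]$ sum to $2z\cdot[x,y]$, which cancels the left-hand side of \eqref{eq:dualp}, leaving
\[
[y,z]\cdot x=[x,z]\cdot y\qquad(x,y,z\in\mathfrak{L}),
\]
an identity I will call $(\star)$. The second step is to combine $(\star)$ with the antisymmetry $[a,b]=-[b,a]$ of the bracket. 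Instantiating $(\star)$ at the two cyclic relabellings of $(x,y,z)$ gives $[z,x]\cdot y=[y,x]\cdot z$ and $[x,y]\cdot z=[z,y]\cdot x$; chaining these with the sign flips coming from antisymmetry rewrites the right-hand side $[x,z]\cdot y$ of $(\star)$ successively as $-[z,x]\cdot y=-[y,x]\cdot z=[x,y]\cdot z=[z,y]\cdot x=-[y,z]\cdot x$. Thus $(\star)$ becomes $[y,z]\cdot x=-[y,z]\cdot x$, and since the ground field is $\mathbb{C}$ we conclude $[y,z]\cdot x=0$, i.e. $x\cdot[y,z]=0$, for all $x,y,z$. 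Finally, applying \eqref{eq:LR} to $[z,x\cdot y]=[z,x]\cdot y+x\cdot[z,y]$ and using the identity just proved shows $[x\cdot y,z]=-[z,x\cdot y]=0$, which finishes the proof.

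All steps are elementary substitutions into the two defining compatibility identities. The one place that requires care — and the step I expect to be the main (if minor) obstacle — is bookkeeping the signs in the chain of equalities that collapses $(\star)$ to $[y,z]\cdot x=-[y,z]\cdot x$; the conclusion then rests on $2$ being invertible, which is exactly where the hypothesis on the ground field is used.
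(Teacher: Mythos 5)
Your proof is correct. Note that the paper itself gives no argument for this proposition: it is stated in the preliminaries as a known result quoted from \cite{Bai}, so there is no in-paper proof to compare against, and your write-up supplies a complete self-contained justification (essentially the standard argument from the original source). Both directions check out. For the ``if'' part, commutativity and antisymmetry do make every term of \eqref{eq:LR} and \eqref{eq:dualp} either a product with a bracket factor or a bracket with a product entry, so both identities reduce to $0=0$. For the ``only if'' part, substituting the Leibniz rule into \eqref{eq:dualp} correctly cancels the term $2z\cdot[x,y]$ and leaves $[y,z]\cdot x=[x,z]\cdot y$; this says the trilinear map $(x,y,z)\mapsto [y,z]\cdot x$ is symmetric in its first two arguments, while antisymmetry of the bracket makes it antisymmetric in the last two, and your sign chain correctly combines these to give $[y,z]\cdot x=-[y,z]\cdot x$, hence $x\cdot[y,z]=0$ since $2$ is invertible over $\mathbb{C}$. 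The final application of \eqref{eq:LR} to get $[x\cdot y,z]=0$ is also right.
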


For an algebra $\mathcal{A}$ of an arbitrary variety, we consider the series
\[
\mathcal{A}^1=\mathcal{A}, \quad \ \mathcal{A}^{i+1}=\sum\limits_{k=1}^{i}\mathcal{A}^k \mathcal{A}^{i+1-k}, \quad i\geq 1.
\]

We say that  an  algebra $\mathcal{A}$ is \emph{nilpotent} if $\mathcal{A}^{i}=0$ for some $i \in \mathbb{N}$. The smallest integer $i$ satisfying $\mathcal{A}^{i}=0$ is called the  \emph{index of nilpotency} or \emph{nilindex} of $\mathcal{A}$.

\begin{definition}
An $n$-dimensional algebra $\mathcal{A}$ is called null-filiform if $\dim \mathcal{A}^i=(n+ 1)-i,\ 1\leq i\leq n+1$.
\end{definition}

All null-filiform associative algebras were described in  \cite[Proposition 5.3]{MO}.

\begin{theorem}[\cite{MO}] An arbitrary $n$-dimensional null-filiform associative algebra is isomorphic to the algebra:
\[\mu_0^n : \quad e_i \cdot e_j= e_{i+j}, \quad 2\leq i+j\leq n,\]
where $\{e_1, e_2, \dots, e_n\}$ is a basis of the algebra $\mu_0^n$ and the omitted products vanish.
\end{theorem}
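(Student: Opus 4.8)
The plan is to show that such an $\mathcal{A}$ is generated by a single element and that the powers of that element furnish the asserted basis. Since $\dim\mathcal{A}^{n+1}=(n+1)-(n+1)=0$, the algebra $\mathcal{A}$ is nilpotent; and since $\dim\mathcal{A}/\mathcal{A}^2 = n-(n-1)=1$, I would fix an element $e_1\in\mathcal{A}\setminus\mathcal{A}^2$, so that $\mathcal{A}=\mathbb{C}e_1+\mathcal{A}^2$. I would also first record the standard fact that, by associativity, the lower series of $\mathcal{A}$ collapses: $\mathcal{A}^i$ equals the span of all $i$-fold products of elements of $\mathcal{A}$, whence $\mathcal{A}^{i+1}=\mathcal{A}\,\mathcal{A}^i=\mathcal{A}^i\,\mathcal{A}$ and $\mathcal{A}^i\mathcal{A}^j\subseteq\mathcal{A}^{i+j}$.

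The core step is the identity
\[
\mathcal{A}^i=\mathbb{C}\,e_1^{\,i}+\mathcal{A}^{i+1},\qquad i\geq 1,
\]
proved by induction on $i$: the base case $i=1$ is the choice of $e_1$, and for the inductive step one substitutes $\mathcal{A}=\mathbb{C}e_1+\mathcal{A}^2$ and $\mathcal{A}^i=\mathbb{C}e_1^{\,i}+\mathcal{A}^{i+1}$ into $\mathcal{A}^{i+1}=\mathcal{A}\,\mathcal{A}^i$, noting that each cross term $e_1\mathcal{A}^{i+1}$, $\mathcal{A}^2 e_1^{\,i}$, $\mathcal{A}^2\mathcal{A}^{i+1}$ is contained in $\mathcal{A}^{i+2}$. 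Iterating this identity gives $\mathcal{A}=\sum_{i=1}^{n}\mathbb{C}e_1^{\,i}+\mathcal{A}^{n+1}=\sum_{i=1}^{n}\mathbb{C}e_1^{\,i}$. Because the dimensions $\dim\mathcal{A}^i=n+1-i$ are strictly decreasing, $\mathcal{A}^i\neq\mathcal{A}^{i+1}$, so $e_1^{\,i}\notin\mathcal{A}^{i+1}$ for $1\leq i\leq n$; in particular $e_1^{\,i}\neq 0$, and a minimal-degree argument (if $\sum c_i e_1^{\,i}=0$ with $c_{i_0}$ the lowest nonzero coefficient, then $e_1^{\,i_0}\in\mathcal{A}^{i_0+1}$, a contradiction) shows the $e_1^{\,i}$ are linearly independent. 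Hence $\{e_1,e_1^{\,2},\dots,e_1^{\,n}\}$ is a basis of $\mathcal{A}$.

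Finally, setting $e_i:=e_1^{\,i}$, associativity gives $e_i\cdot e_j=e_1^{\,i+j}=e_{i+j}$ whenever $i+j\leq n$, while for $i+j>n$ one has $e_1^{\,i+j}\in\mathcal{A}^{i+j}\subseteq\mathcal{A}^{n+1}=0$; this is exactly the multiplication table of $\mu_0^n$, so the linear map sending the standard generators of $\mu_0^n$ to $e_1,e_1^{\,2},\dots,e_1^{\,n}$ is an isomorphism. The computations are routine throughout; the only place demanding care is the inductive identity $\mathcal{A}^i=\mathbb{C}e_1^{\,i}+\mathcal{A}^{i+1}$, where one must invoke associativity to rewrite $\mathcal{A}^{i+1}$ as $\mathcal{A}\,\mathcal{A}^i$ and then verify that every mixed product is absorbed into $\mathcal{A}^{i+2}$ — this is precisely what makes the single generator $e_1$ propagate through the entire filtration.
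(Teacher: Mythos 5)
Your proof is correct. Note that the paper itself gives no argument for this statement --- it is quoted from \cite{MO} (Proposition 5.3 there) --- so there is nothing in the text to compare against; your single-generator argument (pick $e_1\notin\mathcal{A}^2$, use associativity to get $\mathcal{A}^{i+1}=\mathcal{A}\mathcal{A}^i$ and the filtration identity $\mathcal{A}^i=\mathbb{C}e_1^i+\mathcal{A}^{i+1}$, then read off the basis of powers and the multiplication table) is the standard proof of the cited classification, and all the steps, including the linear independence of the powers via the strictly decreasing dimensions $\dim\mathcal{A}^i=n+1-i$, check out.
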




\begin{theorem}[\cite{aku}]  \label{3.1} A linear map $\varphi:\mu_0^n\to \mu_0^n $ is an automorphism of the algebra $\mu_0^n$ if and only if the map $\varphi$ has the
following form:
\[
\varphi(e_1)=\sum\limits_{i=1}^nA_ie_i,
\quad
\varphi(e_i)=\sum\limits_{j=i}^n \sum\limits_{k_1 +k_2 +... +k_i=j} A_{k_1}\cdot A_{k_2}\cdot ... \cdot A_{k_i}e_j, \quad 2\leq i\leq n,
\]
where $A_1\neq0$.
\end{theorem}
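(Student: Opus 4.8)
The plan is to exploit that $\mu_0^n$ is generated, as an associative algebra, by the single element $e_1$. From the rule $e_i\cdot e_j=e_{i+j}$ one reads off $e_i=e_1^{\,i}$ for $1\le i\le n$, while any product of $n+1$ or more basis vectors lies in $(\mu_0^n)^{n+1}=0$ by null-filiformity. Hence an algebra homomorphism $\varphi$ of $\mu_0^n$ is completely determined by the vector $\varphi(e_1)$ through $\varphi(e_i)=\varphi(e_1)^{\,i}$, and the whole problem reduces to understanding these powers.

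For the ``only if'' direction I would assume $\varphi$ is an automorphism, write $\varphi(e_1)=\sum_{k=1}^{n}A_k e_k$, and expand $\varphi(e_i)=\varphi(e_1)^{i}=\big(\sum_{k}A_k e_k\big)^{i}$ using $e_a\cdot e_b=e_{a+b}$, with the convention $e_m=0$ for $m>n$. Collecting terms by the total index $j=k_1+\dots+k_i$ and noting that each $k_\ell\ge1$ forces $j\ge i$ (and that $j>n$ contributes $0$) produces exactly the claimed formula. To get $A_1\ne0$, I would observe that this formula makes the matrix of $\varphi$ in the basis $\{e_1,\dots,e_n\}$ upper triangular, with $i$-th diagonal entry $\sum_{k_1+\dots+k_i=i}A_{k_1}\cdots A_{k_i}=A_1^{\,i}$ since the only composition of $i$ into $i$ positive parts is $1+\cdots+1$; thus $\det\varphi=A_1^{\,n(n+1)/2}$, and invertibility forces $A_1\ne0$.

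For the ``if'' direction, given $A_1,\dots,A_n$ with $A_1\ne0$, I would take the two displayed formulas as the definition of $\varphi$; the expansion above shows $\varphi(e_i)=\varphi(e_1)^{i}$. To verify $\varphi$ is a homomorphism it is enough to check $\varphi(e_i\cdot e_j)=\varphi(e_i)\cdot\varphi(e_j)$ on basis elements: if $i+j\le n$ both sides equal $\varphi(e_1)^{i+j}$, and if $i+j>n$ the left side is $\varphi(0)=0$ while the right side is $\varphi(e_1)^{i+j}\in(\mu_0^n)^{i+j}\subseteq(\mu_0^n)^{n+1}=0$. Bijectivity again follows from the upper-triangular matrix having nonzero diagonal entries $A_1^{\,i}$, so $\varphi$ is an automorphism.

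I do not expect a genuine obstacle here: once one recognizes $e_1$ as a generator and that products of length exceeding $n$ vanish, the form of $\varphi$ is forced and invertibility is a one-line determinant computation. The only thing requiring care is the bookkeeping in expanding the power $\varphi(e_1)^{i}$ and correctly identifying the range $i\le j\le n$ of the outer index, i.e.\ handling the multi-index notation cleanly.
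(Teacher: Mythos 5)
Your proof is correct and complete. Note, however, that the paper does not prove this statement at all: it is imported verbatim from the cited reference \cite{aku}, so there is no in-paper argument to compare against. Your route is the natural one and fills that gap self-containedly: you use that $e_1$ generates $\mu_0^n$ with $e_i=e_1^{\,i}$ and $e_m=0$ for $m>n$, so any homomorphism satisfies $\varphi(e_i)=\varphi(e_1)^{\,i}$; expanding the power and grouping by the total index $j=k_1+\cdots+k_i\geq i$ yields exactly the stated formula, the triangular matrix with diagonal entries $A_1^{\,i}$ gives $\det\varphi=A_1^{\,n(n+1)/2}$ and hence the necessity of $A_1\neq 0$, and the converse check $\varphi(e_i\cdot e_j)=\varphi(e_i)\cdot\varphi(e_j)$ splits correctly into the cases $i+j\leq n$ and $i+j>n$ (in the latter case every monomial in $\varphi(e_1)^{\,i+j}$ has total index exceeding $n$ and so vanishes). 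The only bookkeeping worth making explicit is that in evaluating $e_{k_1}\cdots e_{k_i}$ an intermediate partial sum exceeding $n$ kills the product, which is consistent with the convention $e_m=0$ for $m>n$ since the total index then also exceeds $n$; with that remark your argument is airtight.
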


\section{(Transposed) Poisson algebras of null-filiform associative algebras}

This section describes all (transposed) Poisson algebra structures on null-filiform associative algebras.

\begin{theorem}\label{TP} Let $(\mu_0^n, \cdot, [-,-])$ be a transposed Poisson algebra structure defined on the associative  algebra $\mu_0^n$. Then the multiplication of $(\mu_0^n, \cdot,[-,-])$ has the following form:

$$\bf{TP}(\alpha_2,\dots,  \alpha_n):
\left\{\begin{array}{ll}
e_i\cdot e_j=e_{i+j}, &  2\leq i+j \leq n, \\[1mm]
[e_i,e_j]=(j-i)\sum\limits_{t=i+j-1}^{n}\alpha_{t-i-j+3}e_t, &  3\leq i+j \leq n+1.\\[1mm]
\end{array}\right.$$

\end{theorem}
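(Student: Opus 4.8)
The strategy is to start from an arbitrary transposed Poisson structure $(\mu_0^n,\cdot,[-,-])$ on the fixed associative algebra $\mu_0^n$ and squeeze out the bracket purely from the transposed Leibniz rule \eqref{eq:dualp} together with antisymmetry and the Jacobi identity. First I would write $[e_i,e_j]=\sum_{k}c_{ij}^k e_k$ with unknown structure constants and feed into \eqref{eq:dualp} the special elements $z=e_1$, $x=e_i$, $y=e_j$ (or more generally $z=e_s$), using that $e_s\cdot e_t=e_{s+t}$ when $s+t\le n$ and $0$ otherwise. Since $e_1$ acts by the "shift" $e_t\mapsto e_{t+1}$, the relation $2e_1\cdot[e_i,e_j]=[e_1\cdot e_i,e_j]+[e_i,e_1\cdot e_j]=[e_{i+1},e_j]+[e_i,e_{j+1}]$ becomes a recursion that propagates the bracket of "low-weight" pairs to all higher ones; this should force $[e_i,e_j]$ to depend only on $i+j$ up to the scalar factor, and to be supported in degrees $\ge i+j-1$. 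Degree considerations (the product $e_s\cdot-$ raises degree by $s$, and nothing lives in degree $>n$) are what kill all the lower-degree terms and cut off the sum at $n$.

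**Extracting the coefficient $(j-i)$.** The antisymmetry $[e_i,e_j]=-[e_j,e_i]$ forces $[e_i,e_i]=0$, which anchors the recursion; combined with the shift recursion above, a short induction on $i+j$ shows $[e_i,e_j]=(j-i)\,v_{i+j}$ for a single vector $v_m\in\mu_0^n$ depending only on the total degree $m=i+j$. Writing $v_m=\sum_{t\ge m-1}\beta^{(m)}_t e_t$, one then has to check that the $\beta$'s are forced to be "the same" across different $m$ in the shifted sense dictated by the claimed formula, i.e. $v_{m+1}=e_1\cdot v_m$ whenever that makes sense; this again comes from applying \eqref{eq:dualp} with $z=e_1$ to a pair with $j-i\ne 0$. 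Re-indexing $v_m=\sum_{t=m-1}^{n}\alpha_{t-m+3}\,e_t$ recovers exactly the stated form $\mathbf{TP}(\alpha_2,\dots,\alpha_n)$, with the lowest-degree coefficient in $[e_1,e_2]$ labelled $\alpha_2$ and so on. One must also run \eqref{eq:dualp} with general $z=e_s$, not just $s=1$, to be sure no further constraints appear — but because $e_s\cdot-$ factors as $(e_1\cdot-)$ iterated $s$ times on the associative side, these are automatically consequences of the $s=1$ case.

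**Checking the Jacobi identity.** Having pinned down the shape of $[-,-]$, it remains to verify that this family genuinely satisfies the Jacobi identity for every choice of parameters (so that the classification is sharp and no parameters need to be specialized). Since $[e_i,e_j]$ lies in the span of $e_{i+j-1},\dots,e_n$ and any further bracket of such an element with $e_k$ lands in degrees $\ge i+j+k-2$, I expect the triple bracket $[[e_i,e_j],e_k]$ to vanish identically (degree too high, or the $(j-i)$-type factors conspire to cancel) — in fact a clean way is to observe that the bracket factors through a $1$-dimensional-image-type map in each degree, making $(\mu_0^n,[-,-])$ two-step nilpotent, whence Jacobi is immediate. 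I would spell this nilpotency out and also confirm the transposed Leibniz rule \eqref{eq:dualp} holds for the displayed multiplication by a direct degree-bookkeeping computation.

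**Main obstacle.** The routine-but-delicate part is the bookkeeping in the recursion: correctly tracking which coefficients survive after the shift, getting the index ranges ($3\le i+j\le n+1$, summation from $i+j-1$ to $n$) exactly right, and confirming that the apparent "extra" constraints from \eqref{eq:dualp} with $z=e_s$, $s\ge 2$, or from Jacobi, are vacuous rather than forcing some $\alpha_t=0$. In other words, the conceptual steps are short; the real work is verifying that the $n-1$ parameters $\alpha_2,\dots,\alpha_n$ are all genuinely free.
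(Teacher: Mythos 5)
Your outline reproduces the paper's general strategy (seed the bracket with $[e_1,e_2]$ and propagate via the transposed Leibniz rule), but it has a genuine gap exactly at the point you declare harmless. If you write $[e_1,e_2]=\sum_{t=1}^n\alpha_t e_t$ with no a priori restriction, the $z=e_1$ recursion $2e_1\cdot[e_i,e_j]=[e_{i+1},e_j]+[e_i,e_{j+1}]$ only yields support in degrees $\geq i+j-2$, not $\geq i+j-1$: the coefficient $\alpha_1$ of $e_1$ in $[e_1,e_2]$ propagates to the $e_{i+j-2}$-component of every $[e_i,e_j]$ and is never killed by ``degree considerations.'' In the paper it is eliminated only by the Jacobi identity applied to $\{e_1,e_3,e_n\}$, which gives the quadratic relation $(n^2-3n)\alpha_1^2=0$ (so $\alpha_1=0$ for $n>3$), with separate ad hoc arguments in dimensions $2$ and $3$ using further instances of \eqref{eq:dualp}. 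Your explicit prediction that the Jacobi and $z=e_s$ ($s\geq 2$) constraints are ``vacuous'' is therefore false, and by writing $v_m=\sum_{t\geq m-1}\beta^{(m)}_t e_t$ you have silently assumed the very conclusion that needs Jacobi.

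Two further claims would not survive being written out. First, the assertion that the $z=e_s$, $s\geq 2$, instances of \eqref{eq:dualp} are automatic consequences of the $s=1$ case because $e_s=e_1^{\,s}$ is unjustified: the transposed Leibniz rule is not multiplicative in $z$ (iterating it for $z$ and $z'$ produces the extra term $[zx,z'y]+[z'x,zy]$, which is not controlled without additional identities), and in fact the $z=e_1$ recursion alone does not even determine the bracket — already at total degree $5$ it gives one equation for the two unknowns $[e_1,e_4]$ and $[e_2,e_3]$, which is why the paper must also use the triples $\{e_{i-1},e_1,e_2\}$ and $\{e_i,e_1,e_j\}$, i.e.\ general $z$. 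Second, your sufficiency argument via ``two-step nilpotency'' is wrong: for ${\bf TP}(1,0,\dots,0)$ one has $[e_i,e_j]=(j-i)e_{i+j-1}$, so $[e_1,[e_1,e_2]]=e_2\neq 0$ and the Lie algebra is not nilpotent at all (the paper's remark shows it is solvable with $e_1$ acting semisimply on its nilradical); the triple brackets do not vanish, so Jacobi would have to be checked honestly. Since the theorem as stated only asserts the necessary form of the bracket, the essential missing ingredient in your proposal is the elimination of $\alpha_1$, which cannot be done without the Jacobi identity (and the low-dimensional special cases).
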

\begin{proof} Let $(\mu_0^n, \cdot, [-,-])$ be a transposed Poisson algebra structure defined on the null-filiform associative algebra. To establish the table of multiplications Lie of the transposed structure of the Poisson algebra, we set

$$[e_1,e_2]=\sum\limits_{t=1}^{n}\alpha_{t}e_t. $$

Now, we consider the condition (\ref{eq:dualp}) for the triple $\{e_1, e_1, e_2\}$:
$$2e_1 \cdot [e_1,e_2]= [e_1\cdot e_1,e_2]+[e_1,e_1 \cdot e_2],$$
$$2e_1 \cdot \sum\limits_{t=1}^{n}\alpha_{t}e_t= [e_2,e_2]+[e_1,e_3].$$
From this, we derive the following product
$$[e_1,e_3]=2\sum\limits_{t=2}^{n}\alpha_{t-1}e_t.$$





By induction on $i$, we can show that
$$[e_1,e_i]=(i-1)\sum\limits_{t=i-1}^{n}\alpha_{t-i+2}e_t.$$
The equality holds for $i=2$. Assuming it holds for $i$, we prove the equality for $i+1$. We consider the identity (\ref{eq:dualp}) for the triple $\{e_1, e_1, e_i\}$:
$$2e_1 \cdot [e_1,e_{i}]= [e_1\cdot e_1,e_i]+[e_1,e_1 \cdot e_i],$$
$$2e_1 \cdot (i-1)\sum\limits_{t=i-1}^{n}\alpha_{t-i+2}e_t= [e_2,e_i]+[e_1,e_{i+1}].$$
Thus we obtain
\begin{equation}\label{eqstep1}[e_2,e_i]+[e_1,e_{i+1}]=2(i-1)\sum\limits_{t=i}^{n}\alpha_{t-i+1}e_t.\end{equation}
Similarly, considering (\ref{eq:dualp}) for the triple $\{e_{i-1}, e_1, e_2\}$:
$$2e_{i-1} \cdot [e_1,e_{2}]= [e_{i-1}\cdot e_1,e_2]+[e_1,e_{i-1} \cdot e_2],$$
$$2e_{i-1} \cdot \sum\limits_{t=1}^{n}\alpha_{t}e_t= [e_i,e_2]+[e_1,e_{i+1}],$$
we have
\begin{equation}\label{eqstep2}[e_i,e_2]+[e_1,e_{i+1}]=2\sum\limits_{t=i}^{n}\alpha_{t-i+1}e_t.\end{equation}
From equations (\ref{eqstep1}) and (\ref{eqstep2}), we deduce:
$$[e_1,e_{i+1}]=i\sum\limits_{t=i}^{n}\alpha_{t-i+1}e_t, \ [e_i,e_2]=(2-i)\sum\limits_{t=i}^{n}\alpha_{t-i+1}e_t.$$

Applying induction and the identity (\ref{eq:dualp}) for $2\leq i \leq  n$, we establish:
\begin{equation}\label{eq5}
[e_i,e_j]=(j-i)\sum\limits_{t=i+j-2}^{n}\alpha_{t-i-j+3}e_t.
\end{equation}

We can write
$$2e_{i} \cdot [e_1,e_{j}]= [e_{i}\cdot e_1,e_j]+[e_1,e_{i} \cdot e_j],$$
$$2e_{i} \cdot (j-1)\sum\limits_{t=j-1}^{n}\alpha_{t-j+2}e_t= [e_{i+1},e_j]+[e_1,e_{i+j}],$$
$$2(j-1)\sum\limits_{t=i+j-1}^{n}\alpha_{t-i-j+2}e_t= [e_{i+1},e_j]+(i+j-1)\sum\limits_{t=i+j-1}^{n}\alpha_{t-i-j+2}e_t.$$
Thus, we derive
$$[e_{i+1},e_j]=(j-i-1)\sum\limits_{t=i+j-1}^{n}\alpha_{t-i-j+2}e_t.$$

Next, applying the Jacobi identity to $\{e_1, e_3, e_n\}$, we get:
$$0=[[e_1,e_3],e_{n}]+[[e_3,e_{n}],e_1]+[[e_{n},e_1],e_3]$$
$$=[2\sum\limits_{t=2}^{n}\alpha_{t-1}e_t,e_{n}]+
[(1-n)\sum\limits_{t=n-1}^{n}\alpha_{t-n+2}e_t,e_3]$$
$$=2(n-2)\alpha_1
\sum\limits_{k=n}^{n}\alpha_{k-n+1}e_k+
(1-n)(4-n)\alpha_1
\sum\limits_{k=n}^{n}\alpha_{k-n+1}e_k$$
$$=(2(n-2)+(1-n)(4-n))\alpha_1^2e_n=(n^2-3n)\alpha_1^2e_n.$$
From here, we conclude that $\alpha_1=0$ for $n>3$.

Now, we consider two and three-dimensional cases separately.

For the case $n=2$, we analyze the identity (\ref{eq:dualp}) for the triple $\{e_1, e_1, e_2\}$:
$$0=[e_1\cdot e_1,e_2]+[e_1,e_1 \cdot e_2]=2e_1 \cdot [e_1,e_2]=2e_1\cdot (\alpha_{1}e_1+\alpha_{2}e_2)=2\alpha_1e_2.$$
From this, we have $\alpha_1=0.$

As for the case $n=3$, we analyze the identity (\ref{eq:dualp}) for the triple $\{e_2, e_1, e_2\}$:
$$[e_3,e_2]=[e_2\cdot e_1,e_2]+[e_1,e_2 \cdot e_2]=2e_2 \cdot [e_1,e_2]=2e_2\cdot (\alpha_{1}e_1+\alpha_{2}e_2+\alpha_{3}e_3)=2\alpha_1e_3.$$
On the other hand, from equality (\ref{eq5}), we have $[e_2,e_3]=\alpha_1e_3.$ Thus, we obtain $\alpha_1=0.$

Hence, we obtain the transposed Poisson algebras ${\bf TP}(\alpha_2,\dots, \alpha_n)$ given by the following multiplications:

$$[e_i,e_j]=(j-i)\sum\limits_{t=i+j-1}^{n}\alpha_{t-i-j+3}e_t, \quad 3\leq i+j \leq n+1.$$

\end{proof}

The following theorem establishes a necessary and sufficient condition for two algebras in the family ${\bf TP}(\alpha_2,\dots, \alpha_n)$ to be isomorphic.

\begin{theorem} Let ${\bf TP}(\alpha_2,\dots, \alpha_n)$ and ${\bf TP}'(\alpha_2', \dots,   \alpha_n')$ be isomorphic algebras. Then there exists an automorphism $\varphi$ between these algebras such that the following relation holds for $2\leq t\leq n$:

\begin{equation}\label{eq1}
    \sum\limits_{i=2}^t\sum\limits_{k_1  +\cdots+k_i=t} A_{k_1}... A_{k_i}\alpha_i' =\sum\limits_{j=2}^{t}\sum\limits_{i=1}^{t-j+1}  \sum\limits_{k_1+k_2=t-i-j+3} (t-2i-j+3) A_i  A_{k_1} A_{k_2} \alpha_{j}.
\end{equation}
Here, $\{e_1, e_2, \dots, e_n\}$ is a basis of the algebra ${\bf TP}(\alpha_2,\dots  \alpha_n)$.
\end{theorem}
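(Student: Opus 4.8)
The plan is to exploit the explicit description of automorphisms of $\mu_0^n$ from Theorem \ref{3.1}, since any isomorphism of transposed Poisson algebras in this family is in particular an automorphism of the underlying associative algebra $\mu_0^n$. So I start with the general $\varphi$ of Theorem \ref{3.1}, determined by the scalars $A_1\neq 0, A_2,\dots,A_n$, and impose that $\varphi$ intertwines the two Lie brackets: $\varphi([e_i,e_j]) = [\varphi(e_i),\varphi(e_j)]'$. Because both brackets are completely determined by the single bracket $[e_1,e_2]$ (as established in the proof of Theorem \ref{TP}, all brackets are multiples of $\sum_t \alpha_{\bullet} e_t$), it suffices to impose the intertwining condition on the pair $(e_1,e_2)$, i.e.\ $\varphi([e_1,e_2]) = [\varphi(e_1),\varphi(e_2)]'$, and then check (or argue by the structural identities) that this single equation already forces compatibility on all pairs.

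The key computation is then to expand both sides of $\varphi([e_1,e_2]) = [\varphi(e_1),\varphi(e_2)]'$ in the basis $\{e_1,\dots,e_n\}$ and read off the coefficient of each $e_t$. On the left, $[e_1,e_2] = \sum_{j\ge 2}\alpha_j e_{j}$ wait---more precisely $[e_1,e_2]=\sum_{t=2}^n\alpha_{t-1+\text{shift}}\cdots$; I would use the normalized form from Theorem \ref{TP}, where $[e_1,e_2]=(2-1)\sum_{t=2}^n\alpha_{t-3+2}e_t$ with the convention there, so $\varphi([e_1,e_2])$ becomes $\sum_j \alpha_j \varphi(e_{j-1+1})$ expanded via the formula $\varphi(e_m)=\sum_{l\ge m}\sum_{k_1+\cdots+k_m=l}A_{k_1}\cdots A_{k_m}e_l$. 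On the right, $\varphi(e_1)=\sum_i A_i e_i$ and $\varphi(e_2)=\sum_{l}\sum_{k_1+k_2=l}A_{k_1}A_{k_2}e_l$, and I expand $[\varphi(e_1),\varphi(e_2)]'$ bilinearly using $[e_i,e_j]'=(j-i)\sum_{s\ge i+j-1}\alpha'_{s-i-j+3}e_s$. Collecting the coefficient of $e_t$ on each side and matching gives exactly the stated relation \eqref{eq1}: the left-hand side of \eqref{eq1} is the $e_t$-coefficient of $\varphi$ applied to the primed bracket expressed in terms of $\alpha'_i$, while the right-hand side is the $e_t$-coefficient obtained from expanding the bracket of images in the unprimed algebra, with the factor $(t-2i-j+3)$ coming from the $(j'-i')$ structure constants of the brackets $[e_i,e_{k_1+k_2}]$ type terms and the triple sum over $A_i A_{k_1} A_{k_2}$ from the three factors $\varphi(e_1)$ contributes $A_i$, $\varphi(e_2)$ contributes the convolution $A_{k_1}A_{k_2}$.

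The main obstacle I expect is bookkeeping of the index shifts and the multi-index convolutions: one must carefully track which power of $\varphi(e_m)$ produces an $i$-fold product $A_{k_1}\cdots A_{k_i}$ with $k_1+\cdots+k_i = t$ on the left, and on the right correctly identify that only a \emph{double} convolution $A_{k_1}A_{k_2}$ (from $\varphi(e_2)$) times a single $A_i$ (from $\varphi(e_1)$) appears, with the summation ranges $2\le j\le t$, $1\le i\le t-j+1$, $k_1+k_2 = t-i-j+3$ dictated by the requirement that all indices be $\ge 1$ and that the resulting basis vector index equal $t$. A secondary point to verify is that imposing $\varphi([e_1,e_2])=[\varphi(e_1),\varphi(e_2)]'$ is not only necessary but that for \emph{any} $\varphi$ satisfying it the full bracket intertwining holds; this should follow because, on both sides, every bracket $[e_i,e_j]$ (resp.\ $[e_i,e_j]'$) is a universally-determined linear combination of the $\alpha$'s (resp.\ $\alpha'$'s) via the recursions in the proof of Theorem \ref{TP}, so compatibility propagates automatically from the generating pair. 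I would state this propagation as a short lemma or simply invoke the structure of Theorem \ref{TP}, then present the coefficient comparison as the substance of the proof, arriving at \eqref{eq1} for each $t$ with $2\le t\le n$.
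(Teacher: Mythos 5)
Your proposal is correct and follows essentially the same route as the paper: take the general automorphism of $\mu_0^n$ from Theorem \ref{3.1}, impose the intertwining condition only on the pair $(e_1,e_2)$, expand both sides in the basis $\{e_1,\dots,e_n\}$ using $[e_i,e_j]=(j-i)\sum_{t=i+j-1}^{n}\alpha_{t-i-j+3}e_t$, and compare the coefficients of each $e_t$, the re-indexing $m=t-i-j+3$ and the exchange of summations producing exactly \eqref{eq1}. The propagation step you flag as a secondary worry is not actually required, since the theorem asserts only the necessity of \eqref{eq1} for isomorphic algebras and the paper likewise derives it from the single bracket $[e_1',e_2']$; apart from a harmless index slip in your sketch (for $i=1$, $j=2$ one gets $\alpha_{t-i-j+3}=\alpha_t$, so $[e_1,e_2]=\sum_{t=2}^{n}\alpha_t e_t$), your computation is the paper's.
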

\begin{proof}
Using the automorphism of the algebra $\mu_0^n$ from Theorem \ref{3.1}, we introduce the following notations:

\[
e_1'=\varphi(e_1), \quad
e_i'=\varphi(e_i),\quad 2\leq i\leq n.
\]
Thus, we consider
 $$\begin{array}{lcl}
    [e_1',e_2'] &= &\sum\limits_{i=2}^{n}\alpha_{i}'e_i'=\sum\limits_{i=2}^n \alpha_i'\sum\limits_{j=i}^n\sum\limits_{k_1 +... +k_i=j} A_{k_1}...A_{k_i}e_j \\[1mm]
    &=& \sum\limits_{i=2}^n \sum\limits_{j=i}^n \sum\limits_{k_1+\dots +k_i=j}\alpha_i' A_{k_1} ...A_{k_i}e_j=\sum\limits_{t=2}^n \sum\limits_{i=2}^t\sum\limits_{k_1  +\cdots+k_i=t} \alpha_i' A_{k_1} ...A_{k_i}e_t.  \\
        \end{array}$$

On the other hand, we have
 $$\begin{array}{lcl}
 [e_1',e_2']&=&[\sum\limits_{i=1}^n A_i e_i, \sum\limits_{j=2}^n\sum\limits_{k_1+k_2=j} A_{k_1} A_{k_2} e_j]=\sum\limits_{i=1}^n \sum\limits_{j=2}^n  \sum\limits_{k_1+k_2=j} A_iA_{k_1}A_{k_2} [e_i,e_j]\\[1mm]
 &=&\sum\limits_{i=1}^n \sum\limits_{j=2}^n \sum\limits_{k_1+k_2=j} A_i A_{k_1} A_{k_2}(j-i) \sum\limits_{t=i+j-1}^n \alpha_{t-i-j+3} e_t\\[1mm]
&=&\sum\limits_{i=1}^n \sum\limits_{j=2}^n \sum\limits_{t=i+j-1}^n \sum\limits_{k_1+k_2=j}(j-i) A_i A_{k_1} A_{k_2} \alpha_{t-i-j+3} e_t\\[1mm]
&=&\sum\limits_{i=1}^n \sum\limits_{t=i+1}^n \sum\limits_{j=2}^{t-i+1}\sum\limits_{k_1+k_2=j} (j-i)A_i A_{k_1} A_{k_2} \alpha_{t-i-j+3} e_t\\[1mm]
&=&\sum\limits_{t=2}^n\sum\limits_{i=1}^{t-1} \sum\limits_{j=2}^{t-i+1} \sum\limits_{k_1+k_2=j} (j-i)A_i A_{k_1}  A_{k_2} \alpha_{t-i-j+3} e_t.\end{array}$$

Comparing the coefficients of the obtained expressions for the basis elements for $2\leq t\leq n$, we get the following restrictions:
$$\sum\limits_{i=2}^t\sum\limits_{k_1  +\cdots+k_i=t} A_{k_1}... A_{k_i}\alpha_i'=\sum\limits_{i=1}^{t-1} \sum\limits_{j=2}^{t-i+1}  \sum\limits_{k_1+k_2=j} (j-i)A_i A_{k_1} A_{k_2} \alpha_{t-i-j+3}.$$

Now we transform the right-hand side of the equation. Firstly, we substitute $j$. Setting $m:=t-i-j+3$, we obtain $2\leq m \leq t-i+1$, so we replace $j$ with $t-i-m+3$. This yields:
$$\sum\limits_{i=1}^{t-1} \sum\limits_{j=2}^{t-i+1}  \sum\limits_{k_1+k_2=j} (j-i)A_iA_{k_1} A_{k_2} \alpha_{t-i-j+3}=\sum\limits_{i=1}^{t-1} \sum\limits_{m=2}^{t-i+1} \sum\limits_{k_1+k_2=t-i-m+3} (t-2i-m+3) A_i A_{k_1} A_{k_2} \alpha_{m}.$$

Now, we exchange the order of the first and second summations:
$$\sum\limits_{i=1}^{t-1} \sum\limits_{m=2}^{t-i+1}  \sum\limits_{k_1+k_2=t-i-m+3}(t-2i-m+3) A_i A_{k_1} A_{k_2} \alpha_{m}=\sum\limits_{m=2}^{t} \sum\limits_{i=1}^{t-m+1} \sum\limits_{k_1+k_2=t-i-m+3} (t-2i-m+3) A_i A_{k_1} A_{k_2} \alpha_{m}.$$

Finally, replacing $m$ with $j$, we get the following equality for $2\leq t\leq n$:
$$\sum\limits_{i=2}^t\sum\limits_{k_1  +\cdots+k_i=t} A_{k_1}...A_{k_i}\alpha_i'=\sum\limits_{j=2}^{t} \sum\limits_{i=1}^{t-j+1} \sum\limits_{k_1+k_2=t-i-j+3}(t-2i-j+3) A_iA_{k_1}A_{k_2} \alpha_{j}.$$

\end{proof}

\begin{lemma} Let ${\bf TP}(\alpha_2,\dots,  \alpha_n)$  be a transposed Poisson algebra structure defined on $\mu_0^n$. If $\alpha_i=0,$ for all $2\leq i< s$ for some $s$ $(2\leq s\leq n),$ then the parameter $\alpha_{s}$ is zero invariant. Moreover, there exists $A\in\mathbb{C}$ such that the following relation holds:
$$\alpha_{s}'=\frac{\alpha_{s}}{A^{s-3}}.$$
\end{lemma}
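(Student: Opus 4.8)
The plan is to specialize the isomorphism criterion \eqref{eq1} to the first nontrivial parameter. Assume $\alpha_i = 0$ for all $2 \le i < s$, so that on the right-hand side of \eqref{eq1} only the term $j = s$ survives (all $j < s$ give $\alpha_j = 0$, and $j > s$ is incompatible with $j \le t$ when we take $t = s$). Setting $t = s$ in \eqref{eq1}, the left-hand side becomes $\sum_{i=2}^{s} \sum_{k_1 + \cdots + k_i = s} A_{k_1}\cdots A_{k_i}\,\alpha_i'$; since $\varphi$ is an isomorphism between transposed Poisson algebras of the same family, the image algebra also has $\alpha_i' = 0$ for $i < s$ (this itself needs a brief induction: applying \eqref{eq1} at $t = 2, 3, \dots, s-1$ in turn forces $\alpha_2' = \cdots = \alpha_{s-1}' = 0$, using $A_1 \ne 0$ to invert the leading coefficient at each stage). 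Hence the left-hand side collapses to the single term $i = s$, namely $k_1 = \cdots = k_s = 1$, giving $A_1^{s}\,\alpha_s'$.

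Next I would evaluate the right-hand side at $t = s$, $j = s$. The inner constraint is $k_1 + k_2 = s - i - s + 3 = 3 - i$, and $i$ ranges over $1 \le i \le t - j + 1 = 1$. So only $i = 1$ occurs, forcing $k_1 + k_2 = 2$, i.e. $k_1 = k_2 = 1$, with coefficient $(t - 2i - j + 3) = (s - 2 - s + 3) = 1$. Thus the right-hand side is simply $A_1 \cdot A_1 \cdot A_1 \cdot \alpha_s = A_1^{3}\,\alpha_s$. Equating the two sides yields $A_1^{s}\,\alpha_s' = A_1^{3}\,\alpha_s$, and since $A_1 \ne 0$ we may write $A := A_1$ and conclude $\alpha_s' = \alpha_s / A^{s-3}$. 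In particular $\alpha_s = 0$ if and only if $\alpha_s' = 0$, which is the asserted zero-invariance.

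The main obstacle — really the only subtlety — is justifying that $\alpha_i' = 0$ for $i < s$, i.e. that the vanishing of the initial parameters is preserved under isomorphism. This should be handled by the induction sketched above: at step $t$ (for $t < s$), \eqref{eq1} reads $\sum_{i=2}^{t}\sum_{k_1+\cdots+k_i=t} A_{k_1}\cdots A_{k_i}\,\alpha_i' = (\text{combination of }\alpha_2,\dots,\alpha_{t})$, and the right-hand side vanishes by hypothesis since all indices involved are $< s$; the left-hand side, using the inductive vanishing $\alpha_2' = \cdots = \alpha_{t-1}' = 0$, reduces to $A_1^{t}\,\alpha_t'$, whence $\alpha_t' = 0$. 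One should double-check the degenerate small cases ($s = 2$ gives $\alpha_2' = A^{-1}\alpha_2$, and $s = 3$ gives $\alpha_3' = \alpha_3$, the genuinely invariant one) to make sure the exponent $s - 3$ is correct there as well, but no new ideas are needed.
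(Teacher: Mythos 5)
Your proof is correct and takes essentially the same route as the paper — specializing \eqref{eq1} at $t=s$ so that the right-hand side collapses to $A_1^3\alpha_s$ and the left-hand side to $A_1^{s}\alpha_s'$ — with the added merit that you justify $\alpha_2'=\cdots=\alpha_{s-1}'=0$ by induction on $t$, a step the paper leaves implicit. The only blemish is the parenthetical check for $s=2$: the formula gives $\alpha_2'=\alpha_2/A^{-1}=A\alpha_2$, not $A^{-1}\alpha_2$.
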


\begin{proof} Let ${\bf TP}(\alpha_2,\dots, \alpha_n)$ be a transposed Poisson algebra structure on commutative algebra $\mu_0^n$, and consider a general change of basis. Then, for some natural number $s$, we have the following restriction (\ref{eq1}):
$$ \sum\limits_{i=2}^s\sum\limits_{k_1  +\cdots+k_i=s} A_{k_1} ... A_{k_i}\alpha_i'=\sum\limits_{j=2}^{s} \sum\limits_{i=1}^{s-j+1}  \sum\limits_{k_1+k_2=s-i-j+3} (s-2i-j+3)A_i A_{k_1} A_{k_2} \alpha_{j}.$$

Now, considering that $\alpha_i=0$ for $2\leq i< s$, this reduces to
$$\sum\limits_{k_1  +\cdots+k_s=s} A_{k_1} ... A_{k_s}\alpha_s'=\sum\limits_{i=1}^{s-s+1} \sum\limits_{k_1+k_2=s-i-s+3} (s-2i-s+3)A_i A_{k_1} A_{k_2} \alpha_{s}$$
which simplifies further to
$$ \sum\limits_{k_1  +... +k_s=s} A_{k_1} ... A_{k_s}\alpha_s'= \sum\limits_{k_1+k_2=2}A_1 A_{k_1} A_{k_2} \alpha_{s}.$$
From this equality, we obtain the relation
$$\alpha_{s}'=\frac{\alpha_{s}}{A_1^{s-3}}.$$
\end{proof}

\begin{lemma}\label{lem2} Let $\mathfrak{L}$ be the algebra ${\bf TP}(\alpha_2,\dots, \alpha_n)$. If $\alpha_2\neq0,$ then it is
isomorphic to the algebra ${\bf TP}(1, 0, \dots, 0)$.
\end{lemma}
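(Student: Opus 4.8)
The plan is to use the isomorphism criterion of Theorem stated via relation \eqref{eq1} and to construct an explicit automorphism $\varphi$ of $\mu_0^n$ (equivalently, a choice of scalars $A_1,\dots,A_n$ with $A_1\neq 0$) that simultaneously normalizes $\alpha_2$ to $1$ and kills all higher parameters $\alpha_3,\dots,\alpha_n$. First I would record what \eqref{eq1} says for the smallest index $t=2$: there the only term on the left is $A_1^2\alpha_2'$ and on the right only $i=1$, $j=2$ survives, giving $A_1^2\alpha_2' = (t-2i-j+3)\big|_{t=2,i=1,j=2}\,A_1\cdot A_1 A_1\,\alpha_2$ — i.e. $\alpha_2' = A_1\alpha_2$ after dividing by $A_1^2\neq 0$ (matching the previous lemma's formula $\alpha_s'=\alpha_s/A_1^{s-3}$ at $s=2$, since $A_1^{-(-1)}=A_1$). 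Hence choosing $A_1 = 1/\alpha_2$ already forces $\alpha_2' = 1$.

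The core step is then to show the remaining scalars $A_2,\dots,A_n$ can be chosen recursively so that $\alpha_3' = \alpha_4' = \cdots = \alpha_n' = 0$. I would argue by induction on $t$ from $t=3$ up to $t=n$. Fix $A_1=1/\alpha_2$; suppose $A_2,\dots,A_{t-1}$ have already been determined so that $\alpha_3'=\cdots=\alpha_{t-1}'=0$. In \eqref{eq1} at level $t$, isolate the term on the left containing $A_t$: the partition $k_1+\cdots+k_i=t$ with $i=2$ contributes $2A_1A_{t-1}\alpha_2' + \cdots$, but the genuinely new unknown $A_t$ enters only through $i=2$, $\{k_1,k_2\}=\{1,t-1\}$... wait — more carefully, $A_t$ appears on the left only in the $i=2$ sum as the cross term $2A_1 A_{t-1}$ when one index is $1$; it does \emph{not} appear at all. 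Rather, $A_t$ first appears on the \emph{right}-hand side through the factor $A_{k_1}A_{k_2}$ with $k_1+k_2 = t-i-j+3$: the maximal such index is $t-i-j+3 \le t-1$ (since $i,j\ge 1$, and in fact $j\ge 2$, $i\ge1$ give $\le t$, but $k_1,k_2\ge 1$ so each is $\le t-1$). So in fact \emph{$A_t$ never appears at all in the level-$t$ equation}. This means the correct strategy is the reverse: at level $t$ the equation expresses $\alpha_t'$ (the left side, whose leading term is $A_1^t\alpha_t'$ from $i=2$, $\{k_1,\dots\}=\{1,\dots,1,t-1\}$... no).

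Let me restate the strategy cleanly. I expect the genuine mechanism is: on the left side of \eqref{eq1} at level $t$, the terms involving $\alpha_t'$, $\alpha_{t-1}'$, \dots, $\alpha_3'$ all appear, and by the inductive hypothesis $\alpha_3'=\cdots=\alpha_{t-1}'=0$, so the left side reduces to $A_1^t\alpha_t'$ plus terms with $\alpha_2'=1$ that are polynomial in $A_1,\dots,A_{t-2}$ and \emph{linear} in $A_{t-1}$ (the cross term $2A_1A_{t-1}$ from the $i=2$ partition $1+(t-1)$). The right side is a polynomial in $A_1,\dots,A_{t-1}$ and the $\alpha_j$'s with no $\alpha'$. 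Thus the level-$t$ equation has the shape $A_1^t\alpha_t' + (\text{linear in }A_{t-1}\text{ with coefficient }2A_1\alpha_2') = (\text{known})$, and since we want $\alpha_t'=0$, this \emph{determines} $A_{t-1}$ uniquely (because the coefficient $2A_1\alpha_2' = 2A_1\neq 0$). So the recursion runs: $A_1$ from level $2$, then $A_2$ from level $3$ (setting $\alpha_3'=0$), $A_3$ from level $4$, \dots, $A_{n-1}$ from level $n$; $A_n$ is free (take it $0$). The main obstacle — and the step deserving the most care — is verifying that $A_{t-1}$ really enters the level-$t$ identity linearly with the nonvanishing coefficient $2A_1$, i.e. checking that no other occurrence of $A_{t-1}$ on either side destroys the solvability; this amounts to a careful bookkeeping of which partitions in \eqref{eq1} can produce the index $t-1$. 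Once that is confirmed, the chosen $(A_1,\dots,A_n)$ defines via Theorem \ref{3.1} an automorphism carrying $\mathfrak L$ to ${\bf TP}(1,0,\dots,0)$, completing the proof.
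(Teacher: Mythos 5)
Your strategy (fix $A_1=1/\alpha_2$ from the level $t=2$ relation, then determine $A_2,\dots,A_{n-1}$ recursively from \eqref{eq1} so that $\alpha_t'=0$ for $3\le t\le n$) can in principle be completed, but as written the proof has a genuine gap at exactly the step you yourself flag as ``deserving the most care'' and then leave unverified: the claim that $A_{t-1}$ enters the level-$t$ relation linearly with nonvanishing coefficient $2A_1\alpha_2'$. That coefficient is not correct, because $A_{t-1}$ also occurs on the right-hand side of \eqref{eq1} at level $t$, contrary to your bookkeeping: once through $i=1,\ j=2$, where $k_1+k_2=t$ admits the compositions $(1,t-1)$ and $(t-1,1)$ with weight $t-1$, contributing $2(t-1)A_1^2A_{t-1}\alpha_2$, and once through $i=t-1,\ j=2,\ k_1=k_2=1$ with weight $3-t$, contributing $(3-t)A_1^2A_{t-1}\alpha_2$. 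Combining these with the left-hand occurrence $2A_1A_{t-1}\alpha_2'=2A_1^2A_{t-1}\alpha_2$ (using $\alpha_2'=A_1\alpha_2$), the net coefficient of $A_{t-1}$ is $\bigl(2-(t-1)\cdot2-(3-t)\bigr)A_1^2\alpha_2=(1-t)A_1^2\alpha_2$, which is indeed nonzero for $t\ge3$ --- so the recursion does close, but only after this computation, which is precisely what you omitted, and your stated reason for solvability (coefficient $2A_1$) is wrong in substance even though its conclusion survives. Had the right-hand occurrences cancelled the left-hand one, the whole argument would collapse, so this verification is the heart of the proof and cannot be left ``to be confirmed.'' A smaller point: \eqref{eq1} is stated in the paper as a necessary condition for isomorphism, so to use it constructively you should say explicitly that any choice of $A_1\neq0,A_2,\dots,A_n$ gives, via Theorem \ref{3.1}, an automorphism of $\mu_0^n$ transporting ${\bf TP}(\alpha_2,\dots,\alpha_n)$ to a structure ${\bf TP}(\alpha_2',\dots,\alpha_n')$ whose parameters are determined by \eqref{eq1}; this follows from the computation in that theorem's proof, not from its statement.

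For comparison, the paper avoids this bookkeeping altogether: it first rescales $e_i'=\alpha_2^{-i}e_i$ to normalize $\alpha_2=1$, and then kills $\alpha_3,\alpha_4,\dots$ one at a time by one-parameter substitutions $e_1'=e_1+Ae_j$, whose effect on all basis vectors (hence on $[e_1',e_2']$) is computed directly from $e_i\cdot e_j=e_{i+j}$; at each step only one new parameter appears, linearly with an explicit nonzero coefficient, so the choice $A=-\alpha_{j+1}/j$ is immediate. Your route through the general relation \eqref{eq1} is workable but concentrates all the difficulty in the coefficient analysis you did not carry out.
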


\begin{proof} Let $\alpha_2\neq0$ and consider the following change of basis:
$$e_i'=\frac{1}{\alpha_2^i}e_i, \ \ 1\leq i\leq n.$$
Applying this transformation to the product $[e_1,e_2]=\sum\limits_{t=2}^{n}\alpha_{t}e_t$, we can assume that $\alpha_2=1.$

Next, consider another change of basis for $e_1:$ $$e_1'=e_1+Ae_2.$$  From the relations $e_ie_j=e_{i+j}$ for $2\leq i+j\leq n,$ we obtain
$$e_i'=\sum\limits_{t=0}^{i}\left(\begin{array}{cc}
     i  \\
     t
\end{array}\right)A^te_{i+t}, \ \ 1\leq i\leq n,$$
where we assume that $e_t=0$ for $t>n$.
Setting $A=-\frac{\alpha_3}{2}$ and considering $[e_1,e_2]=e_2+\sum\limits_{t=3}^{n}\alpha_{t}e_t$ we conclude that $\alpha_3=0.$

Now, we prove by induction that it is possible to set $\alpha_j=0,$ for all $3\leq j\leq n$. The base case $j = 3$ is already established. Assuming the claim holds for some $j$, we show it also holds for $j + 1$.  Consider the change of basis: $$e_1'=e_1+Ae_j.$$  Using $e_ie_j=e_{i+j}$ for $2\leq i+j\leq n,$ we derive
$$e_i'=\sum\limits_{t=0}^{i}\left(\begin{array}{cc}
     i  \\
     t
\end{array}\right)A^te_{i+t(j-1)}, \ \ 1\leq i\leq n,$$
where we assume that $e_t=0$ for $t>n$ in this sum. Setting $A=-\frac{\alpha_{j+1}}{j}$ and considering $[e_1,e_2]=e_2+\sum\limits_{t=j+1}^{n}\alpha_{t}e_t$ we conclude that $\alpha_{j+1}=0.$

By induction, we have $\alpha_j=0,$ for all $3\leq j\leq n$, proving that the algebra $\mathfrak{L}$ is isomorphic to ${\bf TP}(1, 0, \dots, 0)$, with the following multimplication rules:
$$\left\{\begin{array}{ll}
e_ie_j=e_{i+j}, &  2\leq i+j \leq n, \\[1mm]
[e_i,e_j]=(j-i)e_{i+j-1}, &  3\leq i+j \leq n+1.\\[1mm]
\end{array}\right.$$
\end{proof}

\begin{lemma}\label{lem3} Let $\mathfrak{L}$ be the algebra ${\bf TP}(\alpha_2, \dots, \alpha_n)$. If $\alpha_2=0,\ \alpha_3\neq0,$ then $\mathfrak{L}$ is
isomorphic to the algebra ${\bf TP}(0,\alpha, 0, \dots, 0)$.
\end{lemma}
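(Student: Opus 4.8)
The plan is to follow the inductive elimination used in Lemma~\ref{lem2}, shifted by one slot because the ``pivot'' here is $\alpha_3$ rather than $\alpha_2$. First I would record that, since $\alpha_2=0$, the $t=2$ and $t=3$ instances of \eqref{eq1} force $\alpha_2'=A_1\alpha_2=0$ and $\alpha_3'=\alpha_3$ for \emph{every} isomorphism in the family $\{{\bf TP}(\alpha_2,\dots,\alpha_n)\}$ --- this is exactly the content of the preceding lemma with $s=3$. Thus $\alpha_3$ is a genuine invariant; writing $\alpha:=\alpha_3\neq 0$, it remains to build an automorphism of $\mu_0^n$ that kills $\alpha_4,\dots,\alpha_n$ while fixing $\alpha_2=0$ and $\alpha_3=\alpha$. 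I would do this by clearing $\alpha_4,\alpha_5,\dots,\alpha_n$ in turn.

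For the inductive step, assume $\alpha_2=\alpha_4=\dots=\alpha_{k-1}=0$ and $\alpha_3=\alpha$ for some $k$ with $4\le k\le n$, and apply the automorphism $\varphi$ with $\varphi(e_1)=e_1+Ae_{k-2}$. Since only $A_1=1$ and $A_{k-2}=A$ are nonzero, Theorem~\ref{3.1} gives the closed form $\varphi(e_i)=\sum_{p\ge 0}\binom{i}{p}A^p e_{i+p(k-3)}$ (reading $e_m=0$ for $m>n$). The transported bracket is again a transposed Poisson structure on $\mu_0^n$, hence of type ${\bf TP}(\alpha_2',\dots,\alpha_n')$ by Theorem~\ref{TP}, the $\alpha_t'$ being determined by $[\varphi(e_1),\varphi(e_2)]=\sum_{t=2}^n\alpha_t'\,\varphi(e_t)$. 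Expanding $[\varphi(e_1),\varphi(e_2)]$ by bilinearity from $[e_i,e_j]=(j-i)\sum_{t\ge i+j-1}\alpha_{t-i-j+3}e_t$ and re-expanding along $\{\varphi(e_i)\}$, I expect to find that (a) $\alpha_t'=\alpha_t$ for all $t\le k-1$, so the pivot $\alpha_3=\alpha$ and the already-cleared $\alpha_4=\dots=\alpha_{k-1}=0$ persist, and (b) the only brackets reaching degree $k$ are $[e_1,e_2]$, $[e_1,e_{k-1}]$, $[e_{k-2},e_2]$, which together with the change-of-basis correction from $\varphi(e_3)=e_3+3Ae_k+\cdots$ give $\alpha_k'=\alpha_k+(k-3)A\alpha$. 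As $k-3\neq 0$ and $\alpha\neq 0$, the choice $A=-\frac{\alpha_k}{(k-3)\alpha}$ makes $\alpha_k'=0$.

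Running this for $k=4,5,\dots,n$ kills $\alpha_4,\dots,\alpha_n$ in succession, and by (a) no step resurrects a cleared parameter or moves $\alpha_2,\alpha_3$; the end result is the algebra ${\bf TP}(0,\alpha,0,\dots,0)$. The part that needs care --- and the only real obstacle --- is claim (a): I would argue that $\varphi$ is triangular for the grading $\deg e_i=i$ (i.e.\ $\varphi(e_i)-e_i$ lives in strictly higher degree), and that because the shift $k-3$ is at least $1$, any $A$-dependent correction to $\alpha_t'$ with $t\le k-1$ is forced to involve $\alpha_2=0$ and hence vanishes. Everything else is a bookkeeping computation with binomial coefficients.
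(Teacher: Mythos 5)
Your proposal is correct and follows essentially the same route as the paper: after noting that $\alpha_3$ is invariant (the preceding lemma with $s=3$), you clear $\alpha_4,\dots,\alpha_n$ successively via the automorphisms $e_1\mapsto e_1+Ae_{k-2}$ with $A=-\frac{\alpha_k}{(k-3)\alpha_3}$, which is exactly the paper's choice (there written as $e_1'=e_1+Ae_{j-1}$, $A=-\frac{\alpha_{j+1}}{(j-2)\alpha_3}$ for $k=j+1$). Your verification in claim (a) that earlier parameters are untouched is the same bookkeeping the paper leaves implicit, so no further comment is needed.
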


\begin{proof} Let $\alpha_3\neq0$, and consider the following change of basis: $$e_1'=e_1+Ae_2.$$
From the products $e_ie_j=e_{i+j}$ for $2\leq i+j\leq n,$ we obtain
$$e_i'=\sum\limits_{t=0}^{i}\left(\begin{array}{cc}
     i  \\
     t
\end{array}\right)A^te_{i+t}, \ \ 1\leq i\leq n,$$
where we assume that $e_t=0$ for $t>n$.
Setting $A=-\frac{\alpha_4}{\alpha_3}$, and considering the product $[e_1,e_2]=\alpha_3 e_3+\sum\limits_{t=4}^{n}\alpha_{t}e_t$, we conclude that $\alpha_4=0.$

Now, we prove by induction that we can eliminate $\alpha_j$ for all $4\leq j\leq n$. The base case $j = 4$ follows from the above step. Assuming that $\alpha_j=0$ holds for some $j\geq 4$, we prove it for $j + 1$. Consider the change of basis: $$e_1'=e_1+Ae_{j-1}.$$  From the products $e_ie_j=e_{i+j}$ for $2\leq i+j\leq n,$ we derive
$$e_i'=\sum\limits_{t=0}^{i}\left(\begin{array}{cc}
     i  \\
     t
\end{array}\right)A^te_{i+t(j-2)}, \ \ 1\leq i\leq n,$$
where we again assume that $e_t=0$ for $t>n$. Setting $A=-\frac{\alpha_{j+1}}{(j-2)\alpha_3}$, and considering the product $[e_1,e_2]=e_2+\sum\limits_{t=j+1}^{n}\alpha_{t}e_t$ we conclude that $\alpha_{j+1}=0.$

Thus, we have shown that $\alpha_j=0,$ for $4\leq j\leq n$, and the algebra $\mathfrak{L}$ is isomorphic to the algebra ${\bf TP}(0, \alpha, 0, \dots, 0)$:
$$\left\{\begin{array}{ll}
e_ie_j=e_{i+j}, &  2\leq i+j \leq n, \\[1mm]
[e_i,e_j]=(j-i)\alpha_3 e_{i+j}, &  3\leq i+j \leq n.\\[1mm]
\end{array}\right.$$

\end{proof}

\begin{lemma}\label{lem4} Let $\mathfrak{L}$ be the algebra ${\bf TP}(\alpha_2, \dots, \alpha_n)$. If $\alpha_2=\alpha_3=0,\ \alpha_4\neq0,$ then it is
isomorphic to the algebra ${\bf TP}(0,0, 1,\alpha,0, \dots, 0)$.
\end{lemma}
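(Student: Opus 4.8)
The plan is to follow the pattern of Lemmas~\ref{lem2} and \ref{lem3}: first exhaust the scaling freedom to normalize the lowest surviving parameter to $1$, and then kill the remaining parameters one at a time by shear-type changes of basis $e_1'=e_1+Ae_m$, each of which is an automorphism of $\mu_0^n$ by Theorem~\ref{3.1} (take $A_1=1$, $A_m=A$ and all other $A_k=0$), hence preserves the products $e_i'\cdot e_j'=e_{i+j}'$. For the normalization, apply the scaling $e_i'=\alpha_4^{\,i}e_i$ (an automorphism of $\mu_0^n$, since $A_1=\alpha_4\ne 0$): it turns $[e_1,e_2]=\sum_t\alpha_t e_t$ into $[e_1',e_2']=\sum_t \alpha_4^{\,3-t}\alpha_t\, e_t'$, so $\alpha_4'=1$ while $\alpha_2=\alpha_3=0$ stay $0$ (scaling cannot create a nonzero entry from a zero one). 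Thus I may assume $\alpha_2=\alpha_3=0$ and $\alpha_4=1$, and I set $\alpha:=\alpha_5$, the parameter that will survive (if $n\le 5$ this already gives the claim, so assume $n\ge 6$).

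Next I would eliminate $\alpha_6,\dots,\alpha_n$ by induction on $k$ running from $6$ to $n$. To kill $\alpha_k$ I use the shear $e_1'=e_1+Ae_{k-3}$ (legitimate as $k-3\ge 3$); by Theorem~\ref{3.1} it gives $e_i'=\sum_{t\ge 0}\binom{i}{t}A^{t}e_{i+t(k-4)}$, with $e_j:=0$ for $j>n$. Expanding $[e_1',e_2']=\sum_u\binom{2}{u}A^{u}[e_1,e_{2+u(k-4)}]+A\sum_u\binom{2}{u}A^{u}[e_{k-3},e_{2+u(k-4)}]$ with the bracket formula of Theorem~\ref{TP} and using $\alpha_2=\alpha_3=0$, every summand other than $[e_1,e_2]$ contributes only in degrees $\ge k$: the only borderline terms are $A[e_{k-3},e_2]$ and $2A[e_1,e_{k-2}]$, whose a priori components in degrees $k-2$ and $k-1$ carry the vanishing coefficients $\alpha_2$ and $\alpha_3$. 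Hence $\alpha_4'=1$, $\alpha_5'=\alpha$ and the previously cleared $\alpha_6,\dots,\alpha_{k-1}$ stay $0$; extracting the coefficient of $e_k$ in $[e_1',e_2']$ and subtracting the $e_k$-component of $\alpha_4'e_4'=e_4'$ gives, after a short computation, $\alpha_k'=\alpha_k+(k-5)A$. Since $k\ge 6$, the choice $A=-\dfrac{\alpha_k}{k-5}$ forces $\alpha_k'=0$. Iterating for $k=6,\dots,n$ leaves precisely the algebra ${\bf TP}(0,0,1,\alpha,0,\dots,0)$, whose nonzero brackets are $[e_i,e_j]=(j-i)\bigl(e_{i+j+1}+\alpha e_{i+j+2}\bigr)$, $3\le i+j\le n+1$ (with $e_k:=0$ for $k>n$).

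The computations are routine; the one point that needs genuine care --- and the main obstacle --- is verifying that each shear $e_1'=e_1+Ae_{k-3}$ affects only degrees $\ge k$, so that it neither disturbs the normalized values $\alpha_4=1$, $\alpha_5=\alpha$ nor re-introduces a parameter killed at an earlier stage. This is exactly where the hypotheses $\alpha_2=\alpha_3=0$ enter, and it is also what pins down the shift amount $k-3$ used here, as opposed to $k-1$ in Lemma~\ref{lem2} and $k-2$ in Lemma~\ref{lem3}.
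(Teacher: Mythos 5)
Your proposal is correct and follows essentially the same route as the paper: scale $e_i\mapsto\alpha_4^{i}e_i$ to normalize $\alpha_4=1$, then successively eliminate $\alpha_6,\dots,\alpha_n$ by the shears $e_1\mapsto e_1+Ae_{k-3}$ with $A=-\alpha_k/(k-5)$ (the paper's $A=-\alpha_{j+1}/(j-4)$ with $k=j+1$), leaving $\alpha_5$ as the surviving parameter. Your verification that each shear only disturbs degrees $\ge k$, thanks to $\alpha_2=\alpha_3=0$, is exactly the point implicit in the paper's induction.
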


\begin{proof} From the equality \eqref{eq1}, we obtain the following equality in general substitution.

$$\alpha_4'=\frac{\alpha_4}{A_1}, \quad \alpha_5'=\frac{\alpha_5}{A_1^2}.$$

Since $\alpha_4\neq0$, we consider the following change of basis:
$$e_i'=\alpha_4^ie_i, \ \ 1\leq i\leq n.$$
Considering the product $[e_1,e_2]=\sum\limits_{t=4}^{n}\alpha_{t}e_t$ we can assume that $\alpha_4=1.$

Now, consider the change of basis: $e_1'=e_1+Ae_3.$  From the products $e_ie_j=e_{i+j}$ for $2\leq i+j\leq n,$ we derive
$$e_i'=\sum\limits_{t=0}^{i}\left(\begin{array}{cc}
     i  \\
     t
\end{array}\right)A^te_{i+2t}, \ \ 1\leq i\leq n,$$
where we assume that $e_t=0$ for $t>n$ in this sum.
Setting $A=-\alpha_6$, and considering the product $[e_1,e_2]=e_4+\sum\limits_{t=5}^{n}\alpha_{t}e_t$ we obtain that $\alpha_6=0.$

Now we prove by induction that it is possible to set $\alpha_j=0,$ for $6\leq j\leq n$. The base case $j = 6$ holds by the above argument. Now, assuming it holds for some $j$, we prove it for $j + 1$. Consider basis change: $e_1'=e_1+Ae_{j-2}.$  From the products $e_ie_j=e_{i+j}$ for $2\leq i+j\leq n,$ we derive
$$e_i'=\sum\limits_{t=0}^{i}\left(\begin{array}{cc}
     i  \\
     t
\end{array}\right)A^te_{i+t(j-3)}, \ \ 1\leq i\leq n,$$
where we assume that $e_t=0$ for $t>n$ in this sum. Setting $A=-\frac{\alpha_{j+1}}{j-4}$, and considering the product $[e_1,e_2]=e_2+\sum\limits_{t=j+1}^{n}\alpha_{t}e_t$ we obtain that $\alpha_{j+1}=0.$

Thus, we have shown that $\alpha_j=0,$ for $6\leq j\leq n$, and the algebra $\mathfrak{L}$ is isomorphic to the algebra ${\bf TP}(0,0,1, \alpha,0, \dots, 0)$:
$$\left\{\begin{array}{ll}
e_ie_j=e_{i+j}, &  2\leq i+j \leq n, \\[1mm]
[e_i,e_j]=(j-i)(e_{i+j+1}+\alpha_5e_{i+j+2}), &  3\leq i+j \leq n-1.
\end{array}\right.$$

\end{proof}

\begin{lemma}\label{lems} Let $\mathfrak{L}$ be the algebra ${\bf TP}(\alpha_2, \dots, \alpha_n)$. If $\alpha_2=\dots=\alpha_{s-1}=0,\ \alpha_s\neq0, \ s\geq 5$ then it is
isomorphic to the algebra ${\bf TP}(0,\dots, 0, 1_s,0,\dots,0,\alpha_{2s-3},0, \dots, 0)$.
\end{lemma}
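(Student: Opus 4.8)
The plan is to mimic the inductive normalization used in Lemmas \ref{lem2}--\ref{lem4}, but now starting from the first nonzero parameter $\alpha_s$ with $s\ge 5$. First I would use the dilation $e_i'=\alpha_s^{\,i}e_i$ (or a suitable power of $\alpha_s$) to rescale so that $\alpha_s=1$; by the zero-invariance Lemma this is legitimate since all $\alpha_i$ with $2\le i<s$ vanish, and the transformation preserves both the associative products $e_ie_j=e_{i+j}$ and the ``shape'' of the bracket, only rescaling the parameters. This reduces the problem to showing that, with $\alpha_s=1$ and $\alpha_i=0$ for $i<s$, one can kill every $\alpha_j$ with $j\neq 2s-3$ in the range $s<j\le n$.

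Next I would run the induction on $j$ exactly as before: assuming $\alpha_i=0$ for all $i$ with $s<i<j$ and $i\neq 2s-3$, apply the change of basis $e_1'=e_1+Ae_{j-s+2}$ (the index chosen so that the leading correction to $[e_1',e_2']$ hits $e_j$). Using $e_ie_j=e_{i+j}$ one gets, as in the previous lemmas,
$$e_i'=\sum_{t=0}^{i}\binom{i}{t}A^te_{i+t(j-s+1)},\quad 1\le i\le n,$$
with the convention $e_t=0$ for $t>n$. Expanding $[e_1',e_2']$ and collecting the coefficient of $e_j$, the leading term is a nonzero multiple of $A$ (coming from $[e_1,e_{j-s+2}]$ paired against $e_s$, whose bracket has a nonzero structure constant because $j-s+2\neq s$ in the relevant range) plus $\alpha_j$; choosing $A$ to be the appropriate ratio forces $\alpha_j=0$. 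The one value of $j$ where this argument breaks is $j=2s-3$: there the relevant bracket coefficient is $(j-i)$ with $i$ making it vanish (the ``self-interaction'' $[e_{s-1},e_{s-1}]$-type term, or equivalently $j-s+2=s-1$ giving a $(s-1)-(s-1)=0$ factor), so $\alpha_{2s-3}$ cannot be removed and must be retained as the genuine invariant $\alpha$.

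The main obstacle I expect is bookkeeping: verifying carefully that for every $j$ in the range, with $j\neq 2s-3$, the coefficient of $e_j$ in $[e_1',e_2']$ really does contain a nonzero multiple of $A$ after the substitution, and that no previously-normalized $\alpha_i$ gets reactivated by the transformation (the binomial expansion only shifts weights upward, so lower parameters are untouched, but this needs a sentence). A secondary point is to confirm that the surviving parameter sits precisely at position $2s-3$ — this is the index at which the ``derivation'' coefficient $(j-i)$ degenerates, matching the pattern $\alpha_5$ at $s=4$ in Lemma \ref{lem4} ($2s-3=5$) and the trivial case in Lemmas \ref{lem2}, \ref{lem3}. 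Once these are checked, one reads off the multiplication table of ${\bf TP}(0,\dots,0,1_s,0,\dots,0,\alpha_{2s-3},0,\dots,0)$ as in the earlier lemmas, completing the proof.
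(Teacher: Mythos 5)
Your overall strategy is the same as the paper's (rescale so that $\alpha_s=1$, then successively shear $e_1\mapsto e_1+Ae_k$ to remove the higher parameters one at a time, the only surviving one being $\alpha_{2s-3}$), but the crux of the lemma --- why exactly the index $2s-3$ is exceptional and every other $\alpha_j$ can be killed --- is mis-argued, and as sketched the computation would not go through. With $e_1'=e_1+Ae_k$ one has $e_2'=e_2+2Ae_{k+1}+A^2e_{2k}$, so the terms of $[e_1',e_2']$ linear in $A$ are $2A[e_1,e_{k+1}]+A[e_k,e_2]$, contributing $(2k+(2-k))A\alpha_s=(k+2)A\alpha_s$ at the first affected index $s+k-1$; this is \emph{never} zero, so no vanishing structure constant $(j-i)$ occurs there, and the ``$[e_{s-1},e_{s-1}]$-type'' term you invoke is zero by antisymmetry and does not enter the computation at all. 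The term you are missing is the one coming from rewriting the result in the new basis: since $\alpha_s'=1$ and $e_s'=e_s+sAe_{s+k-1}+\dots$, the comparison of coefficients gives $\alpha_{s+k-1}'=\alpha_{s+k-1}+(k+2-s)A\alpha_s$. It is the cancellation $(k+2)-s=0$, i.e. $k=s-2$, target index $s+k-1=2s-3$, that makes $\alpha_{2s-3}$ unremovable; without this re-expansion term your argument would (wrongly) kill $\alpha_{2s-3}$ as well, and your stated nonvanishing criterion ``$j-s+2\neq s$'' is not the relevant condition. You reach the correct exceptional index only by pattern-matching with Lemma \ref{lem4}, not from the computation you describe, so the verification you defer to ``bookkeeping'' is precisely the missing core of the proof. (There is also an off-by-one: to reach $e_j$ the shear must be $e_1'=e_1+Ae_{j-s+1}$, not $e_{j-s+2}$; and the rescaling must be $e_i'=\alpha_s^{i/(s-3)}e_i$, since $e_i'=\alpha_s^{i}e_i$ does not normalize $\alpha_s$ to $1$.)

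For comparison, the paper carries out exactly this two-stage normalization: after the rescaling it removes $\alpha_{s+1},\dots,\alpha_{2s-4}$ with shears $e_1'=e_1+Ae_{j-s+2}$ and coefficient $A=\alpha_{j+1}/(2s-4-j)$ (whose denominator vanishes precisely when $j+1=2s-3$), records via \eqref{eq1} that $\alpha_{2s-3}'=\alpha_{2s-3}$ under the remaining substitutions, and then removes $\alpha_{2s-2},\dots,\alpha_n$ by further shears, for which $2s-4-j\neq 0$. Your plan coincides with this once the coefficient of $A$ is computed correctly, including the $-sA$ contribution from $e_s'$; your remark that the shears only disturb parameters with higher index (so nothing previously normalized is reactivated) is correct and is the same observation the paper relies on implicitly.
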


\begin{proof}
Let $\alpha_s\neq0$ and consider the change of basis as follows:
$$e_i'=\alpha_s^{\frac{i}{s-3}}e_i, \ \ 1\leq i\leq n.$$
Considering the product $[e_1,e_2]=\sum\limits_{t=s}^{n}\alpha_{t}e_t$ we can assume that $\alpha_s=1.$

Now consider the change of basis as follows: $e_1'=e_1+Ae_2.$  From the products $e_ie_j=e_{i+j}$ for $2\leq i+j\leq n,$ we derive
$$e_i'=\sum\limits_{t=0}^{i}\left(\begin{array}{cc}
     i  \\
     t
\end{array}\right)A^te_{i+t}, \ \ 1\leq i\leq n,$$
where we assume that $e_t=0$ for $t>n$ in this sum.
Setting $A=\frac{\alpha_{s+1}}{s-4}$, and considering the product $[e_1,e_2]=e_s+\sum\limits_{t=s+1}^{n}\alpha_{t}e_t$ we can derive that $\alpha_{s+1}=0.$

Now we prove by induction using successive changes of the basis elements that it is possible to make $\alpha_j=0,$ for $s+1\leq j\leq 2s-5$. If $j = s+1$, the relationship is fulfilled according to the above equalities. Now, that it holds for some $j$, we prove it for $j + 1$. We consider the change of the basis element $e_1$ as $e_1'=e_1+Ae_{j-s+2}.$  From the products  $e_ie_j=e_{i+j}$ for $2\leq i+j\leq n,$ we derive
$$e_i'=\sum\limits_{t=0}^{i}\left(\begin{array}{cc}
     i  \\
     t
\end{array}\right)A^te_{i+tj}, \ \ 1\leq i\leq n,$$
where we assume that $e_t=0$ for $t>n$ in this sum. Setting $A=\frac{\alpha_{j+1}}{2s-4-j}$, and considering the product $[e_1,e_2]=e_s+\sum\limits_{t=j+1}^{n}\alpha_{t}e_t$ we can derive that $\alpha_{j+1}=0.$
Then we have
$$[e_1,e_2]=e_s+\sum\limits_{t=2s-3}^{n}\alpha_{t}e_t.$$

From the equality \eqref{eq1}, we obtain the following equality in general substitution.

$$\alpha_{2s-3}'=\alpha_{2s-3}.$$

Now, consider the change of basis elements as follows: $e_1'=e_1+Ae_{s-1}.$  From the products $e_ie_j=e_{i+j}$ for $2\leq i+j\leq n,$ we derive
$$e_i'=\sum\limits_{t=0}^{i}\left(\begin{array}{cc}
     i  \\
     t
\end{array}\right)A^te_{i+t(s-2)}, \ \ 1\leq i\leq n,$$
where we assume that $e_t=0$ for $t>n$ in this sum.
Setting $A=-\alpha_{2s-2}$, and considering the product $[e_1,e_2]=e_s+\sum\limits_{t=s+1}^{n}\alpha_{t}e_t$ we can derive that $\alpha_{2s-2}=0.$

Now we prove by induction that it is possible to set $\alpha_j=0,$ for $2s-2\leq j\leq n$. If $j = 2s-2$, the relationship is fulfilled according to the above equalities. Now, that it holds for $j$, we prove it for $j + 1$.  Consider the change of basis element $e_1$ as $e_1'=e_1+Ae_{j-s+2}.$  From the products $e_ie_j=e_{i+j}$ for $2\leq i+j\leq n,$ we derive
$$e_i'=\sum\limits_{t=0}^{i}\left(\begin{array}{cc}
     i  \\
     t
\end{array}\right)A^te_{i+t(j-s+1)}, \ \ 1\leq i\leq n,$$
where we assume that $e_t=0$ for $t>n$ in this sum. Setting $A=-\frac{\alpha_{j+1}}{2s-4-j}$, and considering the product $[e_1,e_2]=e_2+\sum\limits_{t=j+1}^{n}\alpha_{t}e_t$ we can derive that $\alpha_{j+1}=0.$

Thus, we have shown that $\alpha_j=0,$ for $2s-2\leq j\leq n$, and the algebra $\mathfrak{L}$ is isomorphic to the algebra ${\bf TP}(0,\dots, 0, 1_s,0,\dots,0,\alpha_{2s-3},0, \dots, 0)$:
$$\left\{\begin{array}{ll}
e_ie_j=e_{i+j}, &  2\leq i+j \leq n, \\[1mm]
[e_i,e_j]=(j-i)(e_{s+i+j-3}+\alpha_{2s-3}e_{2s+i+j-6}), &  3\leq i+j \leq n-s+3.\\[1mm]
\end{array}\right.$$

\end{proof}

\begin{theorem} Let $(\mu_0^n, \cdot, [-,-])$ be a transposed Poisson algebra structure defined on the associative  algebra $\mu_0^n$ for $n\geq 5$. Then this algebra is isomorphic to one of the following pairwise non-isomorphic algebras:
\[{\bf TP}(1,0,\dots,0), \ {\bf TP}(0,\alpha,0, \dots, 0), \ {\bf TP}(0,\dots, 0, 1_s,0,\dots,0,\alpha_{2s-3},0, \dots, 0), \ 4\leq s\leq n, \ \alpha\in\mathbb{C}.\]
\end{theorem}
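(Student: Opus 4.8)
The plan is to combine the normal form of Theorem~\ref{TP} with the reduction Lemmas~\ref{lem2}, \ref{lem3}, \ref{lem4} and~\ref{lems}, and then to exhibit an isomorphism invariant that separates the resulting list. By Theorem~\ref{TP} every transposed Poisson structure on $\mu_0^n$ equals $\mathbf{TP}(\alpha_2,\dots,\alpha_n)$ for suitable scalars. If all $\alpha_i$ vanish, the bracket is trivial and the algebra is $\mathbf{TP}(0,0,\dots,0)$, i.e.\ the member $\alpha=0$ of the second family. Otherwise let $s$ (with $2\le s\le n$) be the smallest index with $\alpha_s\neq0$: for $s=2$ Lemma~\ref{lem2} gives $\mathbf{TP}(1,0,\dots,0)$; for $s=3$ Lemma~\ref{lem3} gives $\mathbf{TP}(0,\alpha,0,\dots,0)$ with $\alpha=\alpha_3\neq0$; for $s=4$ Lemma~\ref{lem4} gives $\mathbf{TP}(0,0,1,\alpha,0,\dots,0)$, which is the $s=4$ member of the third family; and for $s\ge5$ Lemma~\ref{lems} gives $\mathbf{TP}(0,\dots,0,1_s,0,\dots,0,\alpha_{2s-3},0,\dots,0)$. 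Thus the list is exhaustive, and since $n\ge5$ every $s$ with $4\le s\le n$ genuinely occurs (with a free parameter $\alpha_{2s-3}$ precisely when $2s-3\le n$).

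For the pairwise non-isomorphism I would use $d(\mathfrak L):=\dim_{\mathbb C}[\mathfrak L,\mathfrak L]$, which is an isomorphism invariant because any isomorphism of transposed Poisson algebras restricts to an isomorphism of the underlying Lie algebras. A short inspection of each normal form shows $[\mathfrak L,\mathfrak L]=\langle e_s,e_{s+1},\dots,e_n\rangle$: the lowest basis vector occurring in any bracket is $e_s$ (already in $[e_1,e_2]$), and for each $k$ with $s\le k\le n$ the bracket $[e_1,e_{k-s+2}]$ is a nonzero multiple of $e_k$ plus terms of strictly higher index, so by triangularity the vectors $e_k$ with $k\ge s$ span $[\mathfrak L,\mathfrak L]$. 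Hence $d(\mathfrak L)=n-s+1$, with the convention $s=2$ for $\mathbf{TP}(1,0,\dots,0)$ and $s=3$ for the $\alpha\neq0$ members of the second family, while $d(\mathfrak L)=0$ for $\mathbf{TP}(0,0,\dots,0)$. Since $n\ge5$ these values are pairwise distinct, so the first family, the nontrivial members of the second family, the members of the third family for each fixed $s$, and the trivial algebra are mutually non-isomorphic.

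It remains to separate, for a fixed value of $s$, the algebras with different values of the continuous parameter. For the second family this is the case $s=3$ of the zero-invariance lemma, which gives $\alpha_3'=\alpha_3$ (its exponent $s-3$ vanishes); hence every automorphism of $\mu_0^n$ fixes $\alpha_3$, so by the isomorphism criterion~\eqref{eq1} two nontrivial members of the second family are isomorphic only if their parameters agree. For the third family with $s$ fixed, the leading coefficient has been normalized to $1$, so that same lemma forces $A^{\,s-3}=1$, and then the identity $\alpha_{2s-3}'=\alpha_{2s-3}$ obtained from~\eqref{eq1} inside Lemmas~\ref{lem4} and~\ref{lems} shows that $\alpha_{2s-3}$ is an isomorphism invariant; hence distinct parameters give non-isomorphic algebras here too. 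The main obstacle is precisely this last step: one must control the action of a \emph{general} automorphism of $\mu_0^n$ — which by Theorem~\ref{3.1} depends on all of $A_1,\dots,A_n$ — on the structure constants and distil from the unwieldy relation~\eqref{eq1} the clean invariances $\alpha_3'=\alpha_3$ and $\alpha_{2s-3}'=\alpha_{2s-3}$; this is exactly the computation already carried out in the preceding lemmas. The remaining points (the dimension count above and the degenerate cases $s=n$ and $2s-3>n$, where no free parameter appears) are routine verifications.
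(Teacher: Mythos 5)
Your reduction to the normal forms is exactly the paper's argument: invoke Theorem~\ref{TP}, split according to the first nonzero parameter $\alpha_s$, and cite Lemmas~\ref{lem2}, \ref{lem3}, \ref{lem4} and~\ref{lems} (with the trivial bracket absorbed as the $\alpha=0$ member of the second family), so the exhaustiveness part coincides with the paper's proof. Where you go beyond the paper is the pairwise non-isomorphism: the paper's proof of this theorem stops at the case analysis and leaves distinctness implicit in the isomorphism criterion~\eqref{eq1} and the zero-invariance lemma, whereas you make it explicit by two correct observations: (i) $\dim[\mathfrak{L},\mathfrak{L}]=n-s+1$ (and $0$ for the trivial algebra) separates normal forms with different leading index $s$, since in each normal form the brackets $[e_1,e_{k-s+2}]$ have triangular leading terms starting at $e_s$; and (ii) within a fixed family the residual parameter is an invariant, because \eqref{eq1} together with $\alpha_s'=\alpha_s/A_1^{s-3}$ forces $A_1^{s-3}=1$ between normalized forms, after which the relations $\alpha_3'=\alpha_3$, $\alpha_5'=\alpha_5/A_1^{2}$ and $\alpha_{2s-3}'=\alpha_{2s-3}$ recorded in the lemmas give invariance of the remaining parameter. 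On point (ii) you are in fact slightly more careful than the source, since the identity $\alpha_{2s-3}'=\alpha_{2s-3}$ asserted ``in general substitution'' inside Lemma~\ref{lems} is only homogeneity-consistent once the constraint $A_1^{s-3}=1$ from the normalization $\alpha_s=1$ is imposed, which is exactly how you read it. So: correct, same skeleton as the paper for the classification, plus a non-isomorphism verification the paper omits; its last step leans on the coefficient identities stated inside Lemmas~\ref{lem4} and~\ref{lems} rather than re-deriving them from \eqref{eq1}, which is legitimate given that those lemmas precede the theorem.
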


\begin{proof} Let $(\mu_0^n, \cdot, [-,-])$ be a transposed Poisson algebra structure defined on the associative  algebra $\mu_0^n$. Then Theorem \ref{TP} implies that it is isomorphic to the algebra ${\bf TP}(\alpha_2,\dots, \alpha_n)$. Moreover,
\begin{itemize}
    \item If $\alpha_2\neq 0$ then from Lemma \ref{lem2} we have ${\bf TP}(1,0,\dots,0)$ algebra;
    \item If $\alpha_2=0, \ \alpha_3\neq 0$ then from Lemma \ref{lem3} we have ${\bf TP}(0,\alpha,0, \dots, 0)$ algebra, where $\alpha\neq0$;
    \item If $\alpha_2=\alpha_3=0, \ \alpha_4\neq 0$ then from Lemma \ref{lem4} we have ${\bf TP}(0,0, 1,\alpha,0, \dots, 0)$ algebra;
    \item If $\alpha_2=\dots=\alpha_{s-1}=0, \ \alpha_s\neq 0, \ s\geq5$ then from Lemma \ref{lems} we have the algebra $${\bf TP}(0,\dots, 0, 1_s,0,\dots,0,\alpha_{2s-3},0, \dots, 0), \ 5\leq s\leq n;$$
    \item If $\alpha_i=0, \ 2\leq i\leq n$ then we have ${\bf TP}(0,\dots,0)$ algebra.
\end{itemize} \end{proof}

In the following theorems, we consider 2,3 and 4-dimensional cases. In \cite{Bai} proved that any 2-dimensional complex transposed Poisson algebra on the 2-dimensional associative algebra whose product is given $e_1\cdot e_1=e_2$ is isomorphic to one of the following transposed Poisson algebras:
$${\bf TP}(0): \ e_1\cdot e_1=e_2; \quad {\bf TP}(1): \ e_1\cdot e_1=e_2, \ [e_1,  e_2]=e_2.$$

\begin{theorem} Let $(\mu_0^3, \cdot, [-,-])$ be a transposed Poisson algebra structure defined on the associative  algebra $\mu_0^3$. Then this algebra is isomorphic to one of the following pairwise non-isomorphic algebras:
\[{\bf TP}(1,0), \ {\bf TP}(0,\alpha), \ \alpha\in\mathbb{C}.\]
\end{theorem}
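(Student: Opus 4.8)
The plan is to specialise the $n\ge 5$ analysis to $n=3$, where only two of the earlier branches survive, and then to check (ir)redundancy of the resulting list directly from the transformation rule \eqref{eq1}.

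By Theorem \ref{TP}, any transposed Poisson structure on $\mu_0^3$ is isomorphic to some ${\bf TP}(\alpha_2,\alpha_3)$, whose operations are $e_1\cdot e_1=e_2$, $e_1\cdot e_2=e_3$, $[e_1,e_2]=\alpha_2e_2+\alpha_3e_3$, $[e_1,e_3]=2\alpha_2e_3$ (all other products zero). First I would split on whether $\alpha_2$ vanishes. If $\alpha_2\ne 0$, Lemma \ref{lem2} applies directly and, for $n=3$, already delivers the normal form ${\bf TP}(1,0)$. If $\alpha_2=0$, the structure is written as ${\bf TP}(0,\alpha_3)$; when $\alpha_3\ne0$ this is the $n=3$ instance of Lemma \ref{lem3}, namely ${\bf TP}(0,\alpha)$ with $\alpha\ne0$, and when $\alpha_3=0$ it is ${\bf TP}(0,0)$, so together the case $\alpha_2=0$ gives exactly ${\bf TP}(0,\alpha)$ with $\alpha\in\mathbb{C}$ arbitrary. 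No further cases arise: since $n=3$ there is no index $s\ge 4$ at which a first nonzero $\alpha_s$ could occur, so the situations covered by Lemmas \ref{lem4} and \ref{lems} are vacuous here.

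It remains to prove pairwise non-isomorphism, and this is the only step that needs a computation rather than a citation. Evaluating \eqref{eq1} at $t=2$ gives $\alpha_2'=A_1\alpha_2$ with $A_1\ne 0$, so the condition $\alpha_2\ne 0$ is invariant under isomorphism; hence ${\bf TP}(1,0)$ cannot be isomorphic to any ${\bf TP}(0,\alpha)$. Evaluating \eqref{eq1} at $t=3$ and substituting $\alpha_2=\alpha_2'=0$ gives $\alpha_3'=\alpha_3$, so inside the subfamily $\alpha_2=0$ the parameter $\alpha_3$ is an invariant, and ${\bf TP}(0,\alpha)\cong{\bf TP}(0,\alpha')$ forces $\alpha=\alpha'$; the converse is witnessed by the identity map. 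Therefore the list ${\bf TP}(1,0)$, ${\bf TP}(0,\alpha)$ ($\alpha\in\mathbb{C}$) is complete and contains no repetitions. I expect the only (mild) obstacle to be evaluating the triple sum in \eqref{eq1} at the two small values $t=2,3$ with the index ranges kept straight; everything else is a direct appeal to results already established in the section.
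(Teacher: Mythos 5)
Your proposal is correct and follows essentially the paper's route: reduce to ${\bf TP}(\alpha_2,\alpha_3)$ via Theorem \ref{TP} and then normalize using the change-of-basis relation \eqref{eq1}, which for $n=3$ gives exactly $\alpha_2'=A_1\alpha_2$ and $\alpha_3'=\alpha_3+\tfrac{2A_2\alpha_2}{A_1}$ as you compute; the paper simply chooses $A_1=\tfrac{1}{\alpha_2}$, $A_2=-\tfrac{\alpha_3}{2\alpha_2^2}$ directly instead of citing Lemma \ref{lem2} (whose proof specializes without trouble to $n=3$, the induction being vacuous). Your explicit verification of pairwise non-isomorphism via the invariance of $\alpha_2\neq0$ and of $\alpha_3$ when $\alpha_2=0$ is a worthwhile addition, since the paper asserts non-isomorphism but only proves completeness of the list (the invariance of $\alpha_3$ also agrees with the paper's zero-invariant lemma, $\alpha_s'=\alpha_s/A_1^{s-3}$ at $s=3$).
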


\begin{proof} Let $(\mu_0^3, \cdot, [-,-])$ be a transposed Poisson algebra structure defined on a null-filiform associative algebra. Then according to Theorem \ref{TP} it is isomorphic to the algebra ${\bf TP}(\alpha_2,\alpha_3)$. Applying a change of basis and using equality \eqref{eq1}, we get the following relations:
$$\alpha_2'=A_1\alpha_2, \ \alpha_3'=\frac{A_1\alpha_3+2A_2\alpha_2}{A_1}.$$

\begin{itemize}
    \item Let $\alpha_2\neq 0$. Then, by selecting
    $$A_1=\frac{1}{\alpha_2}, \ A_2=-\frac{\alpha_3}{2\alpha_2^2},$$
    we have the algebra ${\bf TP}(1,0)$;
    \item Let $\alpha_2=0$. Then we derive the algebra ${\bf TP}(0,\alpha).$
\end{itemize}

\end{proof}

\begin{theorem} Let $(\mu_0^4, \cdot, [-,-])$ be a transposed Poisson algebra structure defined on the associative  algebra $\mu_0^4$. Then this algebra is isomorphic to one of the following pairwise non-isomorphic algebras:
\[{\bf TP}(1,0,0), \ {\bf TP}(0,\alpha,0), \ {\bf TP}(0,0,1), \ \alpha\in\mathbb{C}.\]
\end{theorem}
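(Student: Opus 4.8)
The plan is to follow the template of the preceding ($n=3$) theorem. First, by Theorem \ref{TP} every transposed Poisson structure on $\mu_0^4$ is isomorphic to some ${\bf TP}(\alpha_2,\alpha_3,\alpha_4)$, whose nonzero Lie brackets are $[e_1,e_2]=\alpha_2e_2+\alpha_3e_3+\alpha_4e_4$, $[e_1,e_3]=2\alpha_2e_3+2\alpha_3e_4$, $[e_1,e_4]=3\alpha_2e_4$ and $[e_2,e_3]=\alpha_2e_4$. Thus the task reduces to classifying the triples $(\alpha_2,\alpha_3,\alpha_4)\in\mathbb{C}^3$ under the action of $\operatorname{Aut}(\mu_0^4)$.

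Second, I would specialize the isomorphism relation \eqref{eq1} to $n=4$ and $t=2,3,4$. Solving the three resulting equations successively for $\alpha_2',\alpha_3',\alpha_4'$ yields the transformation rules
\[
\alpha_2'=A_1\alpha_2,\qquad
\alpha_3'=\alpha_3+\frac{2A_2}{A_1}\alpha_2,\qquad
\alpha_4'=\frac{\alpha_4}{A_1}+\frac{A_2}{A_1^{2}}\alpha_3+\Bigl(\frac{3A_3}{A_1^{2}}-\frac{2A_2^{2}}{A_1^{3}}\Bigr)\alpha_2,
\]
for an arbitrary automorphism $\varphi$ of $\mu_0^4$ with parameters $A_1\neq0,A_2,A_3,A_4$ as in Theorem \ref{3.1} (the parameter $A_4$ does not occur, since $e_4$ is central in both operations). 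This computation — keeping track of the multi-index sums $\sum_{k_1+\dots+k_i=t}$ and of the coefficient $(t-2i-j+3)$ — is the bulk of the work and the step most prone to slips.

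Third, a short case analysis on the least index $i$ with $\alpha_i\neq0$ finishes the existence part. If $\alpha_2\neq0$, choose $A_1=\alpha_2^{-1}$, then $A_2$ to make $\alpha_3'=0$, and then $A_3$ to make $\alpha_4'=0$ (possible because the coefficient of $A_3$ in $\alpha_4'$ is $3\alpha_2/A_1^{2}\neq0$), landing on ${\bf TP}(1,0,0)$. If $\alpha_2=0$ and $\alpha_3\neq0$, then $\alpha_2'=0$ and $\alpha_3'=\alpha_3$ are forced, while $A_2=-\alpha_4/\alpha_3$ (with $A_1=1$) kills $\alpha_4'$, giving ${\bf TP}(0,\alpha_3,0)$. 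If $\alpha_2=\alpha_3=0$ and $\alpha_4\neq0$, then $A_1=\alpha_4$ gives ${\bf TP}(0,0,1)$. If all $\alpha_i=0$ we get ${\bf TP}(0,0,0)$, the $\alpha=0$ member of the family ${\bf TP}(0,\alpha,0)$. Alternatively one may invoke Lemmas \ref{lem2}, \ref{lem3}, \ref{lem4} directly, noting that for $n=4$ the inductive steps in Lemma \ref{lem4} are vacuous.

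Finally, the same transformation rules establish pairwise non-isomorphism: $\alpha_2'=A_1\alpha_2$ shows that ``$\alpha_2\neq0$'' is an invariant, which separates ${\bf TP}(1,0,0)$ from the rest; when $\alpha_2=0$ the identity $\alpha_3'=\alpha_3$ shows that $\alpha_3$ is an invariant, so the algebras ${\bf TP}(0,\alpha,0)$ are mutually non-isomorphic; and when $\alpha_2=\alpha_3=0$ the identity $\alpha_4'=\alpha_4/A_1$ shows that ``$\alpha_4\neq0$'' is an invariant, so ${\bf TP}(0,0,1)$ is not isomorphic to any ${\bf TP}(0,\alpha,0)$. The anticipated main obstacle is purely computational: deriving the three specializations of \eqref{eq1} correctly, and in particular confirming that the $A_3$-coefficient in $\alpha_4'$ is nonzero exactly when $\alpha_2\neq0$, which is what collapses the first case all the way down to ${\bf TP}(1,0,0)$.
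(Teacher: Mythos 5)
Your proposal is correct and follows essentially the same route as the paper: reduce to ${\bf TP}(\alpha_2,\alpha_3,\alpha_4)$ via Theorem \ref{TP}, specialize \eqref{eq1} to obtain exactly the transformation rules $\alpha_2'=A_1\alpha_2$, $\alpha_3'=\alpha_3+2A_2\alpha_2/A_1$, $\alpha_4'=(A_1^2\alpha_4+A_1A_2\alpha_3+(3A_1A_3-2A_2^2)\alpha_2)/A_1^3$, and run the same case analysis with the same choices of $A_1,A_2,A_3$. Your explicit invariant-based verification of pairwise non-isomorphism is a small addition the paper leaves implicit, and it is consistent with the paper's lemmas.
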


\begin{proof} Let $(\mu_0^n, \cdot, [-,-])$ be a transposed Poisson algebra structure defined on the associative  algebra $\mu_0^n$. According to Theorem \ref{TP}, this algebra is isomorphic to the algebra ${\bf TP}(\alpha_2,\alpha_3,\alpha_4)$. Applying a change of basis and using the equality \eqref{eq1}, we get the following relations:
$$\alpha_2'=A_1\alpha_2, \ \alpha_3'=\frac{A_1\alpha_3+2A_2\alpha_2}{A_1}, \ \alpha_4'=\frac{A_1^2 \alpha_4+ A_1 A_2 \alpha_3+(3A_1A_3-2A_2^2)\alpha_2}{A_1^3}.$$

\begin{itemize}
    \item Let $\alpha_2\neq 0$. Then by selecting
    $$A_1=\frac{1}{\alpha_2}, \ A_2=-\frac{\alpha_3}{2\alpha_2^2}, \ A_3=\frac{\alpha_3^2-\alpha_2\alpha_4}{3\alpha_2^3},$$
    we obtain the algebra ${\bf TP}(1,0,0)$;
    \item Let $\alpha_2=0$ and $\alpha_3\neq 0.$ Choosing
    $$A_1=-\alpha_3, \ A_2=\alpha_4,$$
    we derive the algebra ${\bf TP}(0,\alpha,0),$ where $\alpha\neq0$;
    \item Let $\alpha_2=\alpha_3=0$ and $\alpha_4\neq 0$. Then, by setting $A_1=\alpha_1$, we obtain the algebra ${\bf TP}(0,0,1)$;
    \item If $\alpha_2=\alpha_{3}=\alpha_4=0,$ then the algebra simplifies to ${\bf TP}(0,0,0)$.
\end{itemize}

\end{proof}

\begin{remark} Thus, from the following table and the obtained results, it can be seen that the classifications of the transposed Poisson algebra structures defined on the associative algebra $\mu_0^n$ leads to the following pairwise non-isomorphic algebras:

\begin{itemize}
    \item Transposed Poisson algebras ${\bf TP}(0,\alpha,0, \dots, 0)$ and ${\bf TP}(0,\dots, 0, 1_s,0,\dots,0,\alpha_{2s-3},0, \dots, 0),$ for $4\leq s\leq n, \ \alpha\in\mathbb{C}$ are nilpotent algebras. Furthermore, the algebra ${\bf TP}(0,\dots,0)$  is a trivial transposed Poisson algebra;
    \item Now we consider the algebra ${\bf TP}(1,0,\dots,0)$. By performing a change of basis as follows:
$$x=e_1,  \ e_i'=e_{i+1}, 1\leq i\leq n-1,$$
we obtain the multiplication table of ${\bf TP}(1,0,\dots,0)$ with respect to the product $[-,-]$ in the basis $\{x,e_1,\dots,e_{n-1}\}$:
$$[e_i,e_j]=(j-i)e_{i+j}, \ 3\leq i+j \leq n$$
$$[x,e_i]=ie_{i}, \ 1\leq i\leq n-1.$$
Hence, the algebra ${\bf TP}(1,0,\dots,0)$ is a solvable transposed Poisson algebra with its nilradical spanned by $\{e_2,\dots,e_{n}\}$.\end{itemize}

\end{remark}

All Poisson algebra structures on null-filiform associative algebras were constructed in \cite{AFM}. Below we attempt to discuss the same problem with a different approach.

\begin{theorem} Let $(\mu_0^n, \cdot, [-,-])$ be a  Poisson algebra structure defined on the associative  algebra $\mu_0^n$. Then $(\mu_0^n, \cdot, [-,-])$ is a trivial Poisson algebra.
\end{theorem}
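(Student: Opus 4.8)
The plan is to use only the Leibniz rule \eqref{eq:LR}, which says that for each fixed $x$ the adjoint map $\mathrm{ad}_x=[x,-]$ is a derivation of the commutative associative algebra $(\mu_0^n,\cdot)$, together with the one structural fact that $\mu_0^n$ is associatively generated by $e_1$: namely $e_1^{\,k}=e_k$ for $1\le k\le n$ and $e_1^{\,k}=0$ for $k>n$. Since $\mu_0^n\cdot\mu_0^n\neq 0$ (e.g. $e_1\cdot e_1=e_2$), proving the theorem amounts to showing $[\mathfrak{L},\mathfrak{L}]=0$.

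First I would record that $[e_1,e_1]=0$ by anti-symmetry of the Lie bracket. The key step is to show that $e_1$ is central for the bracket, i.e. $[e_1,e_k]=0$ for all $k$. This follows by induction on $k$: the Leibniz rule gives
$[e_1,e_k]=[e_1,e_1\cdot e_{k-1}]=[e_1,e_1]\cdot e_{k-1}+e_1\cdot[e_1,e_{k-1}]=e_1\cdot[e_1,e_{k-1}]$,
with base case $[e_1,e_1]=0$ (and when $k-1>n$ the relevant product simply vanishes). Hence $[e_1,y]=0$ for every $y\in\mathfrak{L}$ by linearity.

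With $e_1$ central the proof concludes quickly. For arbitrary $x\in\mathfrak{L}$ and any $y$, the Leibniz rule yields $[x,e_1\cdot y]=[x,e_1]\cdot y+e_1\cdot[x,y]=e_1\cdot[x,y]$, using $[x,e_1]=-[e_1,x]=0$. Iterating this identity gives $[x,e_k]=[x,e_1^{\,k}]=e_1^{\,k-1}\cdot[x,e_1]=0$ for every $k$, and therefore $[x,y]=0$ for all $y$. Thus $[\mathfrak{L},\mathfrak{L}]=0$, so $(\mu_0^n,\cdot,[-,-])$ is a trivial Poisson algebra.

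There is no real obstacle in this argument; its content is entirely in the reformulation, namely the observations that $\mu_0^n$ is generated by $e_1$ and that $\mathrm{ad}_{e_1}$ both annihilates $e_1$ and is a derivation. The only point requiring mild care is the bookkeeping of products $e_1^{\,k}$ with $k>n$, which vanish and so only simplify matters. It is worth noting that, in contrast with the transposed case, the Jacobi identity is never used: anti-symmetry together with the Leibniz rule already force the bracket to vanish.
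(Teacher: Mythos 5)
Your proof is correct and follows essentially the same route as the paper: first use the Leibniz rule together with $[e_1,e_1]=0$ to show $[e_1,e_k]=0$ by induction (i.e.\ $e_1$ is central for the bracket), and then apply the Leibniz rule again, using that $\mu_0^n$ is generated by $e_1$, to conclude $[x,e_k]=0$ for all $x$ and hence $[\mathfrak{L},\mathfrak{L}]=0$. The only (cosmetic) difference is that the paper runs the second induction over basis elements $[e_i,e_j]$ rather than an arbitrary $x$.
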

\begin{proof} Let $(\mu_0^n, \cdot, [-,-])$ be a Poisson algebra structure defined on a null-filiform associative algebra. To establish the table of multiplications for the operation $[-,-]$ in this Poisson algebra structure, we consider the following computation for $1\leq i\leq n-1$:
$$[e_1,e_{i+1}]=[e_1,e_1 \cdot e_i]= [e_1,e_1]\cdot e_i+e_1\cdot [e_1,e_i]=e_1\cdot [e_1,e_i].$$
From this we get $[e_1,e_i]=0, \ 2\leq i\leq n.$

Next, considering the following equalities
$$[e_i,e_{2}]=[e_i,e_1 \cdot e_1]= [e_i,e_1]\cdot e_1+e_1\cdot [e_i,e_1]=0,\ 3\leq i \leq n-1,$$
$$[e_i,e_{j}]=[e_i,e_1 \cdot e_{j-1}]= [e_i,e_1]\cdot e_{j-1}+e_1\cdot [e_i,e_{j-1}]=0,\ 3\leq i,j\leq n,$$
we obtain
$$[e_i,e_j]=0, \quad 1\leq i,j\leq n.$$
\end{proof}

\end{document}